\newtheorem{theorem}{Theorem}
\newtheorem{proposition}[theorem]{Proposition}
\newtheorem{lemma}[theorem]{Lemma}
\newcommand{\preal}{\real_+}
\newcommand{\bb}{\mathcal{B}}
\newcommand{\kay}{\mathcal{K}}
\newcommand{\half}{\frac{1}{2}}
\titleformat{\subsection}[runin]
  {\normalfont\large\bfseries}{\thesubsection}{1em}{}
\begin{document}
 \title{Fredholm Property of the Linearized Boltzmann Operator for a Polyatomic Single Gas Model}

          %For each author, make a block with the following macros:

          \author {St\'ephane Brull \thanks {Univ. Bordeaux, IMB, UMR 5251, F- 33400 TALENCE, FRANCE, (Stephane.Brull@math.u-bordeaux1.fr)}  , \and  \hspace{-0.75cm} Marwa Shahine \thanks {Univ. Bordeaux, IMB, UMR 5251, F- 33400 TALENCE, FRANCE, (Marwa.Shahine@math.u-bordeaux.fr)}  , \and \hspace{-0.75cm} and  Philippe Thieullen \thanks{Univ. Bordeaux, IMB, UMR 5251, F- 33400 TALENCE, FRANCE, (Philippe.Thieullen@math.u-bordeaux.fr).}
         }

         \pagestyle{myheadings} %\markboth{COMPACTNESS PROPERTY OF THE LINEARIZED BOLTZMANN OPERATOR}{ST\' EPHANE BRULL,  MARWA SHAHINE, AND PHILIPPE THIEULLN} 

\date{}         
\maketitle

 \begin{abstract}
 In the following work, we consider the Boltzmann equation that models a polyatomic gas by representing the microscopic internal energy by a continuous variable I. Under some convenient assumptions on the collision cross-section $\mathcal{B}$, we prove that the linearized Boltzmann operator $\mathcal{L}$ of this model is a Fredholm operator. For this, we write   $\mathcal{L}$ as a  perturbation of the collision frequency multiplication operator, and we prove that the perturbation operator $\kay$ is compact. The result is established after inspecting the kernel form of $\kay$ and proving it to be $L^2$ integrable over its domain using elementary arguments.
          \end{abstract}
% Kinetic theory,  Polyatomic gas, Linearized Collision Operator, Fredholm Alternative\\
%82D05, 35Q20

 \section{Introduction}\label{intro}

 \indent 
 This work is devoted to the study of the Fredholm property of the linearized Boltzmann operator $\mathcal{L}$ for polyatomic gases. In fact, the Fredholm property  of $\mathcal{L}$ is essential in the Chapman-Enskog asymptotics, and has been always assumed for polyatomic gases in the literature, \cite{brull,11} for instance. In contrast to monoatomic molecules, the energy in a polyatomic gas is not totally stored in the kinetic energy of its molecules but also in their rotational and vibrational modes. Within the kinetic theory, the microscopic internal energy was integrated in the models by several ways. For example, the internal energy is assumed to take discrete values in \cite{ bisic, 11,cc, 10}. On the other hand, some kinetic models take another path by representing the microscopic internal energy by a continuous parameter, and modeling the Boltzmann equation \cite{9} using the Borgnakke-Larsen procedure \cite{8}. 
 Many formal results were achieved for  the model in \cite{9}. Using \cite{28}, the Chapman-Enskog method was recently developed  in \cite{brull}, and many macroscopic models of extended thermodynamics were derived \cite{31}. 
 \vspace{1mm}

 Due to the complexity of the Boltzmann collision integral in monoatomic gases, simple models for the collision term were developed. One of the remarkable simplified models is the BGK model introduced in \cite{bgk}. Yet, the construct of this model gives a Prandtl
number equal to 1 and cannot predict the correct transport coefficients. This inconvenience was solved by the ES-BGK model \cite{holway} that includes another parameter to uncouple the thermal relaxation from the viscosity
relaxation. Besides, the kinetic theory of polyatomic gases is even more complicated. 
 In this context, the simplified ES-BGK  model has been also developed \cite{30,22}, where the return to equilibrium of the solutions in the homogeneous case has been studied as well in \cite{22}. In \cite{yun}, the existence result of the ES-BGK model was achieved in the case where the solution lies close to equilibrium.
 
 %This model contains most of the basic properties of hydrodynamics (the local conservation of mass, momentum and energy), yet it does not contain all the physically relevant features of the classical Boltzmann equation. More precisely, it gives a different Prandtl number compared to the Prandtl number obtained from the Boltzmann equation. For this, the ES-BGK model was derived by Holway.
  
\vspace{1mm}

 In order to construct a quantitative theory and to obtain explicit convergence rates to the equilibrium, explicit estimates for the spectral gap of the linearized Boltzmann operator
should be obtained. 
In the monoatomic single gas setting, Grad \cite{grad} proved that the linearized Boltzmann operator is a Fredholm operator, by writing it as compact perturbation of a coercive multiplication operator. Bobylev was
the first to find explicit estimates for the spectral gap for Maxwell molecules by implementing Fourier methods \cite{b1,b2}. Using the spectral gap of Maxwell molecules, coercivity estimates for hard sphere gases were recently established in \cite{bm}, and for monoatomic gases without angular cut-off in \cite{mouhotstrain}. For monoatomic mixtures, $\mathcal{L}$ was proved to be Fredholm in \cite{milana2}. Right after, coercivity estimates on the spectral gap of the linearized Boltzmann operator were obtained \cite{daus}. For a single diatomic gas, the operator $\kay$ was proved to be a Hilbert-Schmidt operator in \cite{us} under some assumption on the collision cross-section. In \cite{Niclas}, $\mathcal{L}$ was proved to be Fredholm for a single polyatomic gas using a different approach, and in \cite{borsonii}, the Fredholm property of $
\mathcal{L}$ was proved for polyatomic gases undergoing resonance. In this work, we aim to generalize the work \cite{us}  for a single polyatomic gas using elementary arguments. For this, we write $\mathcal{L}$ as a compact perturbation of the collision frequency multiplication operator, and we prove as well that the collision frequency $\nu$ is coercive. This implies that $\mathcal{L}$ is a Fredholm operator.\vspace{1mm}

\indent The plan of the paper is as follows: In section \ref{Sec.2}, we give a brief recall on the collision model \cite{9}, which describes the microscopic state of polyatomic gases, and give an equivalent formulation of the collision operator. In section \ref{Sec.3}, we define the linearized operator $\mathcal{L}$, which is obtained by approximating the distribution function $f$ around the Maxwellian $M$. The main aim of this paper is to prove that the linearized Boltzmann operator is a Fredholm operator, which is achieved in section \ref{Sec.4}. In particular, we write $\mathcal{L}$ as $\mathcal{L}=\kay-\nu$ Id and we prove that $\kay$ is compact, and $\nu$ is coercive. As a result, $\mathcal{L}$ is viewed as a compact perturbation of the multiplication operator $\nu$ Id. To prove $\mathcal{K}$ is compact, we write $\mathcal{K}$ as $\mathcal{K}_3+\mathcal{K}_2-\mathcal{K}_1$, and we prove each $\mathcal{K}_{i}$, with $i=1,\cdots,3$, to be a Hilbert-Schmidt operator. In section 5, we prove the coercivity and we give the monotony property of the collision frequency, which helps to locate the essential spectrum of $\mathcal{L}$. 
\vskip2mm

 \section{The Classical model}\label{Sec.2}

We present first the model in \cite{9} on which our work is mainly based. We start with physical conservation equations and proceed by parameterizing them.

 \subsection{Boltzmann Equation}\label{Subsec.1 }
   Without loss of generality, we first assume that the molecule mass equals unity, and we denote as usual by $(v,v_*)$, $(I,I_*)$ and $(v',v'_*)$, $(I',I'_*)$ the pre-collisional and post-collisional velocity and energy pairs respectively. In this model, the internal energies are assumed to be continuous.  The following conservation of momentum and total energy equations hold:
 
  \begin{align}
  v+v_*&=v^{\prime}+v^{\prime}_* \\
 \frac{1}{2}v^2+\frac{1}{2}v_*^2+I+I_*&=\frac{1}{2}v^{\prime2}+\frac{1}{2}v^{\prime2}_*+I^{\prime}+I^{\prime}_*.\label{conservation}
  \end{align}
 From the above equations, we can deduce the following equation representing the conservation of total energy in the center of mass reference frame:
\begin{equation*}
    \frac{1}{4}(v-v_*)^2+I+I_*=\frac{1}{4}(v^{\prime}-v^{\prime}_*)^{2}+I'+I'_*=E,
\end{equation*}
\noindent with $E$ denoting the total energy of a particle. We introduce in addition the parameter  $R\in[0,1]$ which represents the portion allocated to the kinetic energy after collision out of the total energy, and the parameter $r\in[0,1]$ which represents the distribution of the post internal energy among the two interacting molecules. Namely,
\begin{equation*}
\begin{aligned}
   \frac{1}{4}(v^{\prime}-v^{\prime}_*)^{2}&=RE\\
    I^{\prime}+I^{\prime}_*&=(1-R)E,
\end{aligned}
\end{equation*}
and 
\begin{equation*}
\begin{aligned}
    I'&=r(1-R)E\\
    I'_*&=(1-r)(1-R)E.
\end{aligned}
\end{equation*}
Using the above equations, we can express the post-collisional velocities in terms of the other quantities by the following 
\begin{equation*}
\begin{aligned}
    v'\equiv v'(v,v_*,I,I_*,\omega,R)&=\frac{v+v_*}{2}+\sqrt{{RE}}\,\,T_\omega\bigg[\frac{v-v_*}{|v-v_*|}\bigg]\\
v'_*\equiv v'_*(v,v_*,I,I_*,\omega,R)&=\frac{v+v_*}{2}-\sqrt{RE}\,\,T_\omega\bigg[\frac{v-v_*}{|v-v_*|}\bigg],
\end{aligned}
\end{equation*}
where $\omega\in S^2$, and  $T_\omega(z)=z-2(z.\omega)\omega$. In addition, we define the parameters $r'$ and $R'\in[0,1]$ for the pre-collisional terms in the same manner as $r$ and $R$. In particular

 \begin{equation*}
\begin{aligned}
    \frac{1}{4}(v-v_*)^2&=R'E\\
    I+I_*&=(1-R')E,
\end{aligned}
\end{equation*}
and 
\begin{equation*}
\begin{aligned}
    I&=r'(1-R')E\\
    I_*&=(1-r')(1-R')E.
\end{aligned}
\end{equation*}
Finally, the post-collisional energies can be given in terms of the pre-collisional energies by the following relation
\begin{equation*}
    \begin{aligned}
        I'=&\frac{r(1-R)}{r'(1-R')}I
\\
I'_*=&\frac{(1-r)(1-R)}{(1-r')(1-R')}I_*.
    \end{aligned}
\end{equation*}
The Boltzmann equation for an interacting single polyatomic gas  reads
 \begin{equation}\label{boltzmannequation}
\partial_t{f}+v.\nabla_x{f}=Q(f,f),
 \end{equation}
 where $f=f(t,x,v,I)\geq{0}$ is the distribution function, with $t\geq{0}, x\in\mathbb{R}^3,  v\in\mathbb{R}^3,$ and $I\geq{0}$. The operator $Q(f,f)$ is the quadratic Boltzmann operator \cite{9} given as
 
 \begin{equation}\label{qff}
\begin{aligned}
Q(f, f)(v, I)=\int_{ (0,1)^2 \times S^2\times\mathbb{R}_+\times\mathbb{R}^{3} }\left(\frac{f^{\prime} f_{*}^{\prime}}{\left(I^{\prime} I_{*}^{\prime}\right)^{\alpha}}-\frac{f f_{*}}{\left(I I_{*}\right)^{\alpha}}\right)&\times\mathcal{B} \times (r(1-r))^{\alpha}(1-R)^{2\alpha}\times \\
&\hspace{-0.8cm}I^{\alpha} I_{*}^{\alpha}(1-R) R^{1 / 2} \mathrm{~d} R \mathrm{d} r \mathrm{d} \omega \mathrm{d} I_{*} \mathrm{d} v_{*},
\end{aligned}
 \end{equation}
where $\alpha\geq{0}$, and we use the standard notations $f_*=f(v_*,I_*), f'=f(v',I'),$ and $f'_*=f(v'_*,I'_*)$. {Choosing the power $\alpha$ in the measure of the above integral is essential for the operator $\kay_2$ \eqref{k2} to be a Hilbert-Schmidt operator (see the proof).}
\vspace{1mm}

\noindent The function
 $\mathcal{B}$ is the collision cross-section; a function of $(v,v_*,I,I_*,r,R,\omega)$. In the following, we give some assumptions on $\mathcal{B}$, extended from Grad's assumption for collision kernels of monoatomic gases.
 In general, $\bb$ is assumed to be an almost everywhere positive function satisfying the following microreversibility conditions:
 \begin{equation}\label{reversibility}
 \begin{aligned}
     \bb(v,v_*,I,I_*,r,R,\omega)&=\bb(v_*,v,I_*,I,1-r,R,-\omega),\\
     \bb(v,v_*,I,I_*,r,R,\omega)&=\bb(v',v'_*,I',I'_*,r',R',\omega).
     \end{aligned}
 \end{equation}
 \subsection{Main Assumptions on the Collision Cross-section $\mathcal{B}$}
 \vspace{1mm}
 
 \noindent Throughout this paper, $c>0$ will denote a generic constant. Together with the above assumption \eqref{reversibility}, we assume the following boundedness assumptions on the collision cross section $\mathcal{B}$. In particular, we assume that
 \begin{gather}\label{lbofB}
   \;{\Phi_{\gamma}}(r,R)\;\bigg|\omega.{\frac{(v-v_*)}{|v-v_*|}}\bigg|\Big({|v-v_*|}^{{\gamma}}+I^{\frac{\gamma}{2}}+I_*^{\frac{\gamma}{2}}\Big)\leq \bb(v,v_*,I,I_*,r,R,\omega),
 \end{gather}
 and
 \begin{gather}\label{boundednessofB}
   \bb(v,v_*,I,I_*,r,R,\omega)\leq \;\bigg|\omega.{\frac{(v-v_*)}{|v-v_*|}}\bigg|\Psi_{\gamma}(r,R)\Big({|v-v_*|}^{{\gamma}}+I^{\frac{\gamma}{2}}+I_*^{\frac{\gamma}{2}}\Big), 
 \end{gather}
 %\begin{gather}\label{lbofB}
  %C_1(1-R)^{\alpha}(r(1-r))^{\beta}|v-v_*|\Big|\omega.\frac{v-v_*}{|v-v_*|}\Big|\leq \bb(v,v_*,I,I_*,r,R,\omega),
 %\end{gather}
 %and
 %\begin{gather}\label{boundednessofB}
  % \bb(v,v_*,I,I_*,r,R,\omega)\leq C_2(1-R)^{\alpha}(r(1-r))^{\beta}|v-v_*|\Big|\omega.\frac{v-v_*}{|v-v_*|}\Big|, 
 %\end{gather}
where $\gamma\geq 0$. In addition, ${\Phi}_{\gamma}$ and  $\Psi_{\gamma}$ are positive functions such that
\begin{equation*}
{\Phi}_{\gamma}\leq \Psi_{\gamma},
\end{equation*}
and
\begin{equation}\label{symmetry}
    \Phi_{\gamma}(r,R)=\Phi_{\gamma}(1-r,R), \quad \Psi_{\gamma}(r,R)=\Psi_{\gamma}(1-r,R).
\end{equation}
In addition, $\Psi_{\gamma}$ satisfies the following 

\begin{equation}\label{k2condition1}
  \Psi_{\gamma}^2(r,R)(r(1-r))^{\min\{2\alpha-1-\gamma,\alpha-1\}}R(1-R)^{3\alpha-\gamma} \in L^1((0,1)^2).
\end{equation}

\noindent In fact, though assumption \eqref{k2condition1} seems to be strict, yet it covers several physical models. In addition, one may notice that as the value of $\alpha$ increases, condition \eqref{k2condition1} covers a wider class of functions $\Psi_{\gamma}$. We give now models for $\mathcal{B}$ that satisfy condition \eqref{k2condition1}.
%In the following, we present a comparison between the conditions set on $\mathcal{B}$ in \cite{milana}, \cite{doric}, and in this paper.\\

%In \cite{doric} and this paper, $\tilde{\bb}_1$ and $\tilde{\bb}_2$ have the expression
\subsection{{Models of the Cross-section  $\mathcal{B}$ }}

We present first the relation between the number of atoms in a molecule and the value of $\alpha$, see \cite{physics}.  Let $D$ be the number of degrees of freedom in a molecule of $N$ atoms, then $\alpha$ is given in terms of $D$ in the formula:
\begin{equation}
    \alpha=\frac{D-5}{2}.
\end{equation}
To relate $D$ with $N$, We consider the following cases of molecules:
\begin{enumerate}
    \item \textbf{Non-vibrating molecules}
    In this scenario, we distinguish between linear and non-linear molecules. Regarding the fact that vibrations are not occurring, a linear (respectively non-linear) molecule will always remain linear (respectively non-linear) even after collisions. The number of degrees of freedom $D$ in this case will be the sum of the rotational and translational degrees of freedom in $\mathbb{R}^3,$ and will not depend on the number of atoms in the gas molecule.
    \begin{itemize}
        \item Linear molecules:\begin{itemize}[label={--}]
\item translational degrees of freedom: 3
\item   rotational degrees of freedom: 2
\end{itemize} 
and therefore $D=5$ and $\alpha=0$.
 \item Non-linear molecules:\begin{itemize}[label={--}]
\item translational degrees of freedom: 3
\item   rotational degrees of freedom: 3
\end{itemize} 
and therefore $D=6$ and $\alpha=1/2$. 
    \end{itemize}
    
    \item \textbf{Vibrating molecules} In this case, the total number of degrees of freedom depends on $N$. Consider a molecule of $N$ atoms in $\mathbb{R}^3$, then as long as the shape of the molecule is deformable due to vibrations, the position of each atom will be determined freely by 3 degrees of freedom. Thus, the total number of degrees of freedom of the molecule of $N$ atoms will equal to $3N$. Hence, $D=3N$ and 
    \begin{equation}
    \alpha=\frac{3N-5}{2}.
\end{equation}
 We notice that in this case, the fact that the molecule is linear or non-linear doesn't have an impact on $D$ and $\alpha$.
\end{enumerate}
We give now some models that satisfy assumptions \eqref{k2condition1}, which have been recently studied in the literature (see \cite{milana,doric, Niclas}).\bigskip

\noindent \underline{\textit{Examples} } Consider the case of molecules where vibrations are taken into account, and suppose that 

\noindent $\alpha>\frac{\gamma}{2}$, that is the case of all vibrating polyatomic molecules and the non-vibrating non-linear molecules (where $\alpha=1/2$) if $\gamma\in[0,1)$. Then the following collision cross sections suggested in \cite{milana}
\begin{equation}\label{model3}
\mathcal{B}\left(v, v_{*}, I, I_{*}, r, R,\omega\right)=c\Big|\omega.\frac{v-v_*}{|v-v_*|}\Big|(|v-v_*|^{\gamma}+I^{\gamma/2}+I_*^{\gamma/2}),\end{equation}
{which is identical to the model
\begin{equation}
\mathcal{B}\left(v, v_{*}, I, I_{*}, r, R,\omega\right)=c\Big|\omega.\frac{v-v_*}{|v-v_*|}\Big|E^{\gamma},\end{equation}}
together with the model
\begin{equation}\label{model1}
\begin{aligned}
\mathcal{B}\left(v, v_{*}, I, I_{*}, r, R,\omega\right)=c\Big|\omega.\frac{v-v_*}{|v-v_*|}\Big|\left(R^{\frac{\gamma}{2}}\left|v-v_{*}\right|^{\gamma}\right.
\left.+\left(r(1-R) {I}\right)^{\frac{\gamma}{2}}+\left((1-r)(1-R) {I_{*}}\right)^{\frac{\gamma}{2}}\right)
\end{aligned}
\end{equation}
satisfy \eqref{boundednessofB} by taking for model \eqref{model3}
$$\Phi_{\gamma}(r,R)=\Psi_{\gamma}(r,R)= \Psi_{\gamma}(r,R)=1,$$
and for model \eqref{model1}
$$\Phi_{\gamma}(r,R)=\min\{R,(1-R)\}^{\frac{\gamma}{2}}\min\{r,(1-r)\}^{\frac{\gamma}{2}}, \text{and \;} \Psi_{\gamma}(r,R)=\max\{R^{\frac{\gamma}{2}},(r(1-R) )^{\frac{\gamma}{2}}\}.$$
In \cite{doric}, the authors considered the class of collision cross-sections having the expression
\begin{equation}\label{model2}
\begin{aligned}
\mathcal{B}\left(v, v_{*}, I, I_{*}, r, R,\omega\right)=b\left(\omega.\frac{v-v_*}{|v-v_*|}\right)\left(R^{\frac{\gamma}{2}}\left|v-v_{*}\right|^{\gamma}\right.
\left.+\left(r(1-R) {I}\right)^{\frac{\gamma}{2}}+\left((1-r)(1-R) {I_{*}}\right)^{\frac{\gamma}{2}}\right),
\end{aligned}
\end{equation}
where $b$ was assumed to be $L^1$ integrable on $S^2$ while establishing the first six fields
equations, whereas $b$ was assumed constant for the fourteen moments model.
\bigskip

\subsection{Equivalent Formulation of the Collision Operator}
In this section, we aim to write the collision operator \eqref{qff} in an equivalent form, {which shall be of a great interest in further works. However, this parametrization will not be used in this paper.} The derivation of the latter is a result of subsequent changes of variables, see \eqref{transformation}.  For simplification purpose, we write first the expression of the collision operator in the $\sigma$-notation, where $\sigma=T_{\omega}\Big(\frac{v-v_*}{|v-v_*|}\Big)$. The Jacobian of this transformation (see \cite{us}) is given as
\begin{equation}\label{os}
d\omega=\frac{d\sigma}{2\big|\sigma-\frac{v-v_*}{|v-v_*|}\big|}.
\end{equation}
The final result sought is the Jacobian of the following map:
\begin{equation}\label{transformation}
\begin{aligned}
\mathcal{T}: \mathbb{R}^{6} \times \mathbb{R}^2_{+} \times(0,1)^2\times S^2 & \rightarrow \mathbb{R}^{6} \times \mathbb{R}_{+}^{2}\times  \mathbb{R}^{3} \times \mathbb{R}_{+} \\
\left({v},{v}_{*}, I, I_{*}, r, R,\sigma\right) & \mapsto\left(v, G, E,I, v^{\prime},I^{\prime}\right),
\end{aligned}
\end{equation}
where $g=v-v_*$ and $G=\frac{v+v_*}{2}$. For this transformation, the following Jacobians are elementary:
\begin{equation*}\label{j1}
    J_{(v,v_*I,I_*,r,R,\sigma)\mapsto(g,G,I,I_*,r,R,\sigma)}=1,
\end{equation*}
and 
\begin{equation}\label{j2}
    J_{(g,G,I,I_*,r,R,\sigma)\mapsto(g,G,I,E,r,R,\sigma)}=1.
\end{equation}
Equation \eqref{j2} is due to the fact that only $E$ is a function of $I_*$. What remains in deducing the Jacobian of $\mathcal{T}$ is calculating the Jacobian of the transformation $(g,G,I,E,r,R,\sigma)\mapsto(v,G,I,E,v',I')$. As an intermediate step we define
\begin{equation*}
    \lambda=\sqrt{RE}, \quad \mu=r(1-R),
\end{equation*}
which induces the Jacobian $$J_{(g,G,I,E,r,R,\sigma)\mapsto(g,G,I,E,\lambda,\mu,\sigma)}=\frac{1}{2}\frac{(1-R)}{\sqrt{R}}\sqrt{E}.$$
Thus the final sub-transformation is $(g,G,I,E,\lambda,\mu,\sigma)\mapsto(v,G,I,E,v',I')$, where specifically,
\begin{equation}
    v'= G+\lambda \sigma, \quad \text{and} \quad I'=\mu E.
\end{equation}
It's clear that 
\begin{equation*}
  J_{(g,G,I,E,\lambda,\mu,\sigma)\mapsto(g,G,I,E,\lambda,I',\sigma)}=E
\end{equation*}
and for $v'$ we have
\begin{equation}
J_{(g,G,I,E,\lambda,I',\sigma)\mapsto(g,G,I,E,v',I')}
   ={\lambda}^2 ={RE},
\end{equation}
since $(\lambda,\sigma)$ is the spherical representation of $v'-G$. 
As $v=\frac{1}{2}g+G$, then the Jacobian
\begin{equation}
J_{(g,G,I,E,v',I')\mapsto(v,G,I,E,v',I')}
   =\frac{1}{8}.
\end{equation}
Finally, combining the preceding transformations, the Jacobian of $\mathcal{T}$ is
\begin{equation}
    J_{\mathcal{T}}=\frac{1}{16}R^{\half}(1-R)E^{\frac{5}{2}}.
\end{equation}
In other words,
\begin{equation*}
    \text{d}v  \text{d} G  \text{d}I \text{d}E \text{d}v' \text{d}I'=\frac{1}{16}R^{\half}(1-R)E^{\frac{5}{2}}\text{d}v  \text{d} v_*  \text{d}I \text{d}I_* \text{d}r \text{d}R \text{d}\sigma .
\end{equation*}

The equivalent model of \eqref{qff}, based on the above computations is therefore

\begin{equation}\label{qff3}
\begin{aligned}
Q(f, f)(v, I)=\int_{\mathbb{R}^{3} \times \mathbb{R}_{+}\times  \mathbb{R}^{3} \times \mathbb{R}_{+}}\left(\frac{f^{\prime} f_{*}^{\prime}}{\left(I^{\prime} I_{*}^{\prime}\right)^{\alpha}}-\frac{f f_{*}}{\left(I I_{*}\right)^{\alpha}}\right) W(v,I,v',I',G,E) \;\mathrm{d} G \mathrm{d} E\mathrm{~d} v' \mathrm{d} I',
\end{aligned}
 \end{equation}
 where
 \begin{equation}\label{W}
     W(v,I,v',I',G,E)=\frac{8}{\big|\sigma-\frac{g}{|g|}\big|}\times (I^{\prime}I_*^{\prime }II_*)^{\alpha}\times E^{-\frac{5}{2}-2\alpha}\times\mathcal{B}(v,v_*,   I,I_*,r,R,\sigma),
 \end{equation}
 where $I_*=I_*(v,I,G,E)$, $I'_*=I'_*(v',I',G,E)$, $v'_*=v'_*(G,v')$, $v_*=v_*(G,v)$, $\sigma=\sigma(v',G)$, $R=R(v',E,G)$, and $r=r(I',v',E,G)$.
 \vspace{1mm}
 
\noindent Moreover, $W$ in \eqref{W} is clearly microreversible, 
and the measure $\text{d}E\text{d}G\text{d}v\text{d}I\text{d}v'\text{d}I'$ is obviously invariant. We remark that $\omega$ and $-\omega$ refer to the same $\sigma$, and thus the Jacobian \eqref{os} has to be taken twice in \eqref{qff3}. 

\section{The Linearized Boltzmann Operator}\label{Sec.3}
We state first the H-theorem for polyatomic gases which was initially established in \cite{9}. 
  In particular, the entropy production functional 
 \begin{equation*}
     D(f)=\int_{\mathbb{R}^3}  \int_{\mathbb{R}_+}  Q(f,f)\log f\;\;\; \mathrm{d}I\mathrm{d}v\leq{0},
 \end{equation*}
 and the following are equivalent
 \begin{enumerate}
     \item The collision operator $Q(f,f)$ vanishes, i.e. $Q(f,f)(v,I)=0$ for every $v\in\mathbb{R}^3$ and $I\geq{0}$.
     \item The entropy production vanishes, i.e.
     $D(f)=0.$
     \item There exists $T>0$, $n>0$, and $u\in\mathbb{R}^3$ such that 
     \begin{equation}\label{max1}
         f(v,I)=\frac{n}{(2\pi)^{\frac{3}{2}}\Gamma(\alpha+1)( \kappa T)^{\alpha+\frac{5}{2}}}I^{\alpha}e^{-\frac{1}{kT}(\frac{1}{2}(v-u)^2+I)},
     \end{equation}
 \end{enumerate}
where $\kappa$ in \eqref{max1} is the Boltzmann constant. The linearization of the Boltzmann equation of polyatomic gases is taken around the local Maxwellian function, which  represents the equilibrium state of a gas and is denoted by $M_{n,u,T}(v,I)$, and given by 
\begin{equation}\label{maxwellian1}
        {M}_{n,u,T}(v,I)=\frac{n}{(2\pi)^{\frac{3}{2}}\Gamma(\alpha+1)( \kappa T)^{\alpha+\frac{5}{2}}}I^{\alpha}e^{-\frac{1}{\kappa T}(\frac{1}{2}(v-u)^2+I)},
     \end{equation}
 where $n, u,$ and $T$ in \eqref{maxwellian1} are the number of molecules per unit volume, the hydrodynamic velocity, and the temperature respectively. In particular,
 $$n= \int_{\mathbb{R}^3} \int_{\mathbb{R}_+}  \!\!\!\!f\mathrm{d}I\mathrm{d}v, \quad n u = \int_{\mathbb{R}^3} \int_{\mathbb{R}_+} v f \mathrm{d}I\mathrm{d}v, \quad  {\left(\alpha+\frac{5}{2}\right)n\kappa T}= \int_{\mathbb{R}^3}\int_{\mathbb{R}_+} \bigg(\frac{(v-u)^2}{2} +I\bigg)   f \mathrm{d}I\mathrm{d}v. $$
Without loss of generality, we will consider in the sequel a normalized version ${M}_{1,0,1}$, by assuming ${\kappa T=n=1}$ and $u=0$. For the sake of simplicity, the index will be dropped. In particular,
\begin{equation}\label{eq12}
    {M}(v,I)={M}_{1,0,1}(v,I)=\frac{1}{(2\pi)^{\frac{3}{2}}\Gamma(\alpha+1)} I^{\alpha}e^{-\frac{1}{2}v^2-I}.
\end{equation}
%\subsection{Linearization around the Maxwellian.}\label{Subsec.3}
We look for a solution ${f}$ around ${M}$ defined in \eqref{eq12} having the form
\begin{equation*}
    {f}(v,I)={M}(v,I)+ {M}^{\frac{1}{2}}(v,I){g}(v,I).
\end{equation*}
 The linearization of the Boltzmann operator \eqref{qff} around ${M}$ leads to introduce the linearized Boltzmann operator ${\mathcal{L}}$ given as
\begin{equation*}
    \begin{aligned}
       {\mathcal{L}}{g}&={M}^{-\frac{1}{2}}[Q({M},{M}^{\half}{g})+Q({M}^{\frac{1}{2}}{g},{M})].
       \end{aligned}
\end{equation*}
In particular, ${\mathcal{L}}$ writes
\begin{equation}
\begin{aligned}
    {\mathcal{L}}{g}= {M}^{-\frac{1}{2}}\int_{\Delta}\bigg[\frac{M^{\prime}M'_*{}^{\half}}{{\sqrt{I'_{*}{}^{{\alpha}}}}I'{}^{\alpha}}\frac{{g}_*^{\prime}}{\sqrt{I'_{*}{}^{\alpha}}}
    &-\frac{{M}{M}_*^{\half}}{I^{\alpha}\sqrt{I_*^{\alpha}}}\frac{{g}_*}{\sqrt{I_*^{\alpha}}}+\frac{{M}'_*{M}'{}^{\half}}{I'_*{}^{\alpha}\sqrt{I'{}^{\alpha}}}\frac{{g}'}{\sqrt{I'{}^{\alpha}}}-\frac{{M}_*{M}^{\frac{1}{2}}}{I_*^{\alpha}\sqrt{I^{\alpha}}}\frac{{g}}{\sqrt{I^{\alpha}}}\bigg]\times\\ &(r(1-r))^{\alpha}(1-R)^{2\alpha+1}  R^{1/2}I^{\alpha} I_{*}^{\alpha}\mathcal{B}\;\;\mathrm{d}r\mathrm{d}R\mathrm{d}\omega\mathrm{d}I_*\mathrm{d}v_*.
    \end{aligned}
\end{equation}
Thanks to the conservation of total energy \eqref{conservation} we have
 $\frac{M}{I^\alpha }\frac{M_*}{ I_*^\alpha}=\frac{M'}{I^{\prime\alpha} }\frac{M'_*}{ I_*^{\prime\alpha}}$, and so $\mathcal{L}$ has the following form:
%\subsection{Linearization around the Maxwellian.}\label{Subsec.3}
%We look for a solution $f$ around $M$ having the form
%\begin{equation}\label{eq12}
%    f(v,I)=M(v,I)+ M^{\frac{1}{2}}(v,I)g(v,I).
%\end{equation}
 %The linearization of the Boltzmann operator around $M$ \eqref{eq12} leads to introduce the linearized Boltzmann operator $\mathcal{L}$ given as
%\begin{equation*}
%    \begin{aligned}
 %      \mathcal{L}g&=M^{-\frac{1}{2}}[Q(M,M^{\half}g)+Q(M^{\frac{1}{2}}g,M)].
  %     \end{aligned}
%\end{equation*}

%In this work, we will assume that $\varphi(I)=1$ for ease of computations. This case indeed corresponds to  diatomic gas molecules. We remark that this assumption doesn't affect the nature of the proof used to prove the compactness of $\kay$. 
%Thanks to the conservation of total energy expressed in  \eqref{conservation}, we have $MM_*=M'M'_*$, and so $\mathcal{L}$ has the explicit form:
\begin{equation*}
\begin{aligned}
\mathcal{L}(g)=
&-{I^{-\frac{\alpha}{2}}}\int_{\Delta}\frac{g_{*}}{I_*^{\frac{\alpha}{2}}}\frac{M^{\frac{1}{2}}}{I^{\frac{\alpha}{2}}} \frac{M_*^{\frac{1}{2}}}{I_*^{\frac{\alpha}{2}}}  (r(1-r))^{\alpha}(1-R)^{2\alpha+1}  R^{1/2}I^{\alpha} I_{*}^{\alpha}\mathcal{B}\, \mathrm{d}r\mathrm{d}R\mathrm{d}\omega\mathrm{d}I_*\mathrm{d}v_*\\
&-I^{-{\alpha}}\int_{\Delta} g \frac{M_*}{I_*^{{\alpha}}} \;(r(1-r))^{\alpha}(1-R)^{2\alpha+1}  R^{1/2}I^{\alpha} I_{*}^{\alpha}\mathcal{B} \, \mathrm{d}r\mathrm{d}R\mathrm{d}\omega\mathrm{d}I_*\mathrm{d}v_*\\
&+I^{-\frac{\alpha}{2}}\int_{\Delta}\frac{g^{\prime}_{*}}{I_*^{\prime\frac{\alpha}{2}}}\frac{M_*^{\frac{1}{2}}}{I_*^{\frac{\alpha}{2}}} \frac{M^{\prime\frac{1}{2}}}{I^{\prime\frac{\alpha}{2}}}   (r(1-r))^{\alpha}(1-R)^{2\alpha+1}  R^{1/2}I^{\alpha} I_{*}^{\alpha}\mathcal{B}\, \mathrm{d}r\mathrm{d}R\mathrm{d}\omega\mathrm{d}I_*\mathrm{d}v_*\\
&+\,I^{-\frac{\alpha}{2}}\int_{\Delta} \frac{g^{\prime}}{I^{\prime\frac{\alpha}{2}}}\frac{M_*^{\frac{1}{2}}}{I_*^{\frac{\alpha}{2}}}  \frac{M_*^{\prime\frac{1}{2}}}{I_*^{\prime\frac{\alpha}{2}}} (r(1-r))^{\alpha}(1-R)^{2\alpha+1}  R^{1/2}I^{\alpha} I_{*}^{\alpha}\mathcal{B}\, \mathrm{d}r\mathrm{d}R\mathrm{d}\omega\mathrm{d}I_*\mathrm{d}v_*.
\end{aligned}
\end{equation*}

\noindent  Here, $\Delta$ refers to the open set $(0,1)^2\times{S^2}\times\mathbb{R}_+\times\mathbb{R}^3$. In addition, $\mathcal{L}$ can be written in the form $$\mathcal{L}=\mathcal{K}-\nu\; \text{Id},$$
where 
\begin{equation}\label{formofk}
  \begin{aligned}
 \kay g&=I^{-\frac{\alpha}{2}}\int_{\Delta}\frac{g^{\prime}_{*}}{I_*^{\prime\frac{\alpha}{2}}} \frac{M_*^{\frac{1}{2}}}{I_*^{\frac{\alpha}{2}}} \frac{M^{\prime\frac{1}{2}}}{I^{\prime\frac{\alpha}{2}}}   (r(1-r))^{\alpha}(1-R)^{2\alpha+1}  R^{1/2}I^{\alpha} I_{*}^{\alpha}\mathcal{B}\, \mathrm{d}r\mathrm{d}R\mathrm{d}\omega\mathrm{d}I_*\mathrm{d}v_*\\
&+\,I^{-\frac{\alpha}{2}}\int_{\Delta}\frac{g^{\prime}}{I^{\prime\frac{\alpha}{2}}} \frac{M_*^{\frac{1}{2}}}{I_*^{\frac{\alpha}{2}}}  \frac{M_*^{\prime\frac{1}{2}}}{I_*^{\prime\frac{\alpha}{2}}} (r(1-r))^{\alpha}(1-R)^{2\alpha+1}  R^{1/2}I^{\alpha} I_{*}^{\alpha}\mathcal{B}\, \mathrm{d}r\mathrm{d}R\mathrm{d}\omega\mathrm{d}I_*\mathrm{d}v_*\\
 &-{I^{-\frac{\alpha}{2}}}\int_{\Delta}\frac{g_{*}}{I_*^{\frac{\alpha}{2}}}\frac{M^{\frac{1}{2}}}{I^{\frac{\alpha}{2}}} \frac{M_*^{\frac{1}{2}}}{I_*^{\frac{\alpha}{2}}}  (r(1-r))^{\alpha}(1-R)^{2\alpha+1}  R^{1/2}I^{\alpha} I_{*}^{\alpha}\mathcal{B}\, \mathrm{d}r\mathrm{d}R\mathrm{d}\omega\mathrm{d}I_*\mathrm{d}v_*,
 \end{aligned}
\end{equation}
and 

\begin{equation}\label{collisionfrequency}
\begin{aligned}\hspace{-1cm}
\nu(v,I)=I^{-{\alpha}}\int_{\Delta} g \frac{M_*}{I_*^{{\alpha}}} \;(r(1-r))^{\alpha}(1-R)^{2\alpha+1}  R^{1/2}I^{\alpha} I_{*}^{\alpha}\mathcal{B} \, \mathrm{d}r\mathrm{d}R\mathrm{d}\omega\mathrm{d}I_*\mathrm{d}v_*,
    \end{aligned}
   \end{equation}
which represents the collision frequency.
We write also $\kay$ as $\kay=\mathcal{K}_3+\mathcal{K}_2-\mathcal{K}_1$ with

    \begin{align}\label{k1}
    \begin{split}
    \mathcal{K}_1g={I^{-\frac{\alpha}{2}}}\int_{\Delta}\frac{g_{*}}{I_*^{\frac{\alpha}{2}}}\frac{M^{\frac{1}{2}}}{I^{\frac{\alpha}{2}}} \frac{M_*^{\frac{1}{2}}}{I_*^{\frac{\alpha}{2}}}  (r(1-r))^{\alpha}(1-R)^{2\alpha+1}  R^{1/2}I^{\alpha} I_{*}^{\alpha}\mathcal{B}\, \mathrm{d}r\mathrm{d}R\mathrm{d}\omega\mathrm{d}I_*\mathrm{d}v_*,
    \end{split}
\end{align}

    \begin{align}\label{k2}
  \mathcal{K}_2g=I^{-\frac{\alpha}{2}}\int_{\Delta}\frac{g^{\prime}_{*}}{I_*^{\prime\frac{\alpha}{2}}} \frac{M_*^{\frac{1}{2}}}{I_*^{\frac{\alpha}{2}}} \frac{M^{\prime\frac{1}{2}}}{I^{\prime\frac{\alpha}{2}}}   (r(1-r))^{\alpha}(1-R)^{2\alpha+1}  R^{1/2}I^{\alpha} I_{*}^{\alpha}\mathcal{B}\, \mathrm{d}r\mathrm{d}R\mathrm{d}\omega\mathrm{d}I_*\mathrm{d}v_*,
  \end{align}
  and
  \begin{equation}\label{k3}
   \kay_3g= I^{-\frac{\alpha}{2}}\int_{\Delta}\frac{g^{\prime}}{I^{\prime\frac{\alpha}{2}}} \frac{M_*^{\frac{1}{2}}}{I_*^{\frac{\alpha}{2}}}  \frac{M_*^{\prime\frac{1}{2}}}{I_*^{\prime\frac{\alpha}{2}}} (r(1-r))^{\alpha}(1-R)^{2\alpha+1}  R^{1/2}I^{\alpha} I_{*}^{\alpha}\mathcal{B}\, \mathrm{d}r\mathrm{d}R\mathrm{d}\omega\mathrm{d}I_*\mathrm{d}v_*.
\end{equation}
  The linearized operator $\mathcal{L}$ is a symmetric operator, with kernel 
  \begin{equation*}
      \text{ker}\;\mathcal{L}={M^{1/2}}\text{ span}\{1,v_i, \frac{1}{2}v^2+I\}\quad i=1,\cdots,3.
  \end{equation*}
  Since $\mathcal{L}$ is symmetric and $\nu$ Id is self-adjoint on $$Dom(\nu\text{ Id})=\{g\in L^2(\mathbb{R}^3\times\mathbb{R}_+): \nu g\in L^2(\mathbb{R}^3\times\mathbb{R}_+) \},$$ then $\kay$ is symmetric.
In the following section, we prove that $\kay$ is a bounded compact operator on $L^2(\mathbb{R}^3\times\mathbb{R}_+)$. Hence, $\mathcal{L}$ is a self adjoint operator on Dom $(\mathcal{L})=\text{Dom}(\nu\text{ Id})$. In section \ref{sec5} we prove that $\nu$ is coercive, and therefore  $\mathcal{L}$ is a Fredholm operator on $L^2(\mathbb{R}^3\times\mathbb{R}_+)$.
\section{Main Result}\label{Sec.4}
We give the main result on the linearized Boltzmann operator based on the assumptions of the collision cross section \eqref{boundednessofB} and \eqref{lbofB}. 
In particular, using \eqref{lbofB} we prove that the multiplication operator by $\nu$ Id is coercive and using \eqref{boundednessofB} we prove that $\mathcal{K}$ is compact. This leads to the Fredholm property of $\mathcal{L}$ on $L^2(\mathbb{R}^3\times\mathbb{R}_+)$.
\bigskip

\noindent We give first the following lemma, which will be used in the proof of our result.

\begin{lemma}\label{lemma1}
There exists $0<c_1\leq c_2$ such that for any $a,b\in \mathbb{R}$ the following inequality holds
\begin{equation}
    c_1\leq \frac{\max \{r^{a}(1-r)^{b},(1-r)^{a}r^{b}\}}{(r(1-r))^{\min\{a, b\}}}\leq c_2
\end{equation}
where $r\in [0,1].$
\end{lemma}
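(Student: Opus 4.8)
The plan is to reduce to the case $a\le b$ by symmetry, factor the common power $(r(1-r))^{\min\{a,b\}}$ out of both arguments of the maximum, and then estimate the resulting function, which depends only on $r$ and on the gap $b-a\ge 0$. First I would note that both the numerator and $\min\{a,b\}$ are unchanged when $a$ and $b$ are swapped (swapping merely interchanges the two entries of the $\max$), so without loss of generality I may take $a\le b$, so that $\min\{a,b\}=a$.

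Next comes the factorization, which is the whole content of the argument. For $r\in(0,1)$ I would write
\[
r^{a}(1-r)^{b}=(r(1-r))^{a}\,(1-r)^{b-a},\qquad (1-r)^{a}r^{b}=(r(1-r))^{a}\,r^{b-a},
\]
so that, after cancelling the common factor $(r(1-r))^{a}$, the ratio in the statement equals $\max\{(1-r)^{b-a},r^{b-a}\}$. Setting $s:=b-a\ge 0$, the upper bound is immediate: since $r,1-r\in[0,1]$ and $s\ge 0$, both $r^{s}\le 1$ and $(1-r)^{s}\le 1$, so the ratio is at most $c_{2}=1$. For the lower bound, at least one of $r,1-r$ is $\ge \tfrac12$, and because $t\mapsto t^{s}$ is nondecreasing on $[0,1]$, the larger of $r^{s},(1-r)^{s}$ is at least $(\tfrac12)^{s}=2^{a-b}$; hence $c_{1}=2^{-|a-b|}$ works. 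The values at $r\in\{0,1\}$ are then recovered by continuity, since the simplified expression equals $1$ there.

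There is no real obstacle of depth here; the result is elementary once the factorization is seen. The two points that merely require care are, first, the reading of the quantifiers: the constants genuinely depend on $a,b$ (indeed $c_{1}=2^{-|a-b|}$ degenerates as $|a-b|\to\infty$ and cannot be chosen uniformly), which is harmless in the intended application where $a,b$ are the fixed exponents built from $\alpha$ and $\gamma$ appearing in \eqref{k2condition1}; and second, the apparent $0/0$ indeterminacy at the endpoints $r\in\{0,1\}$, which is dissolved by the cancellation carried out above rather than being an actual singularity.
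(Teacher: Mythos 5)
Your proof is correct. Note that the paper states Lemma \ref{lemma1} without any proof at all (it is invoked implicitly when passing from the bounds $r^{2\alpha-1-\gamma}(1-r)^{\alpha-1}$ and $(1-r)^{2\alpha-1-\gamma}r^{\alpha-1}$ to the single integrability condition \eqref{k2condition1}), so there is nothing to compare against; your argument --- reduce to $a\le b$ by symmetry, factor $r^{a}(1-r)^{b}=(r(1-r))^{a}(1-r)^{b-a}$ and $(1-r)^{a}r^{b}=(r(1-r))^{a}r^{b-a}$, then bound $\max\{r^{s},(1-r)^{s}\}$ for $s=b-a\ge 0$ --- is the natural one and supplies the missing proof, with the explicit constants $c_{1}=2^{-|a-b|}$ and $c_{2}=1$ (and $c_1$ is sharp, being attained at $r=\tfrac12$). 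Your remark about the quantifiers is also on point: as literally written (``there exists $0<c_{1}\le c_{2}$ such that for any $a,b\in\mathbb{R}$\,\ldots'') the lemma is false, since at $r=\tfrac12$ the ratio equals $2^{-|a-b|}$, which tends to $0$ as $|a-b|\to\infty$, so no uniform $c_{1}>0$ exists; the constants must be allowed to depend on $a,b$, which is harmless in the paper because the lemma is only applied with the fixed exponents $a=2\alpha-1-\gamma$, $b=\alpha-1$ determined by $\alpha$ and $\gamma$.
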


 \noindent We state now the following theorem, which is the main result of the paper.
\begin{theorem}\label{prop2}
The operator $\kay$ of polyatomic gases defined in \eqref{formofk} is a compact operator from $L^2(\mathbb{R}^3\times\mathbb{R}_+)$ to $L^2(\mathbb{R}^3\times\mathbb{R}_+)$, and the multiplication operator by $\nu$ is coercive. As a result, the linearized Boltzmann operator $\mathcal{L}$ is an unbounded self adjoint Fredholm operator from Dom$(\mathcal{L})=$Dom$(\nu \text{ Id})\subset L^2(\mathbb{R}^3\times\mathbb{R}_+)$ to $L^2(\mathbb{R}^3\times\mathbb{R}_+)$.
\end{theorem}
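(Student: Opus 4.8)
The plan is to prove the three assertions in order---compactness of $\kay$, coercivity of $\nu$, and then the Fredholm property as a formal consequence. For compactness I would use the decomposition $\kay=\kay_3+\kay_2-\kay_1$ recorded in \eqref{k1}--\eqref{k3} and show that each $\kay_i$ is a Hilbert--Schmidt operator on $L^2(\mathbb{R}^3\times\mathbb{R}_+)$; since Hilbert--Schmidt operators are compact and the compact operators form a closed linear subspace, $\kay$ is then compact. Concretely, for each $i$ I would integrate out the variables $r,R,\omega$ (and, for the gain terms, perform the appropriate collisional change of variables) so as to exhibit $\kay_i$ as a genuine integral operator $\kay_i g(v,I)=\int k_i(v,I,v_*,I_*)\,g(v_*,I_*)\,\mathrm{d}v_*\,\mathrm{d}I_*$, and then verify that $k_i\in L^2\big((\mathbb{R}^3\times\mathbb{R}_+)^2\big)$.

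The operator $\kay_1$ is the most direct: its kernel already appears as an explicit integral over the auxiliary variables with argument $g_*=g(v_*,I_*)$, and after inserting the upper bound \eqref{boundednessofB} on $\bb$ the half-Maxwellian factors $M^{\half}M_*^{\half}$ supply Gaussian decay in $v,v_*$ and exponential decay in $I,I_*$, so the $L^2$ norm of $k_1$ factors into elementary Gaussian/Gamma integrals times an $(r,R)$-integral that is finite by \eqref{symmetry} and Lemma \ref{lemma1}. The delicate terms are $\kay_2$ and $\kay_3$, where the arguments $g'_*$ and $g'$ are evaluated at post-collisional states; there I would change variables from $(v_*,I_*,r,R,\omega)$ to the pair $(v'_*,I'_*)$ (resp. $(v',I')$) together with complementary collision parameters that are then integrated out to build the kernel, using the Jacobian bookkeeping of Section \ref{Sec.2} (the map $\mathcal{T}$ and \eqref{os}), and combining microreversibility \eqref{reversibility} with the identity $\frac{M}{I^\alpha}\frac{M_*}{I_*^\alpha}=\frac{M'}{I'^\alpha}\frac{M'_*}{I_*'^\alpha}$ to redistribute the Maxwellian weights onto the new variables.

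I expect the Hilbert--Schmidt estimate for $\kay_2$ to be the main obstacle, and this is exactly where the integrability hypothesis \eqref{k2condition1} is forced (as the paper already flags after \eqref{qff}). After the change of variables the kernel of $\kay_2$ carries the factors $(r(1-r))^{\alpha}$, $(1-R)^{2\alpha+1}R^{\half}$ and the various powers of $I,I_*,I',I'_*$ coming from the $I^{-\frac{\alpha}{2}}$ and $(II_*)^{\alpha}$ weights; bounding $\bb$ by \eqref{boundednessofB} introduces $\Psi_\gamma$ together with the growth $|v-v_*|^{\gamma}+I^{\gamma/2}+I_*^{\gamma/2}$, and collecting all the $r,R$-powers produces precisely the exponents $\min\{2\alpha-1-\gamma,\alpha-1\}$ and $3\alpha-\gamma$ appearing in \eqref{k2condition1}. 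Lemma \ref{lemma1} is what lets me replace the asymmetric $r$-powers by the symmetric combination $(r(1-r))^{\min\{\cdot,\cdot\}}$, after which \eqref{k2condition1} guarantees finiteness of the $(0,1)^2$-integral while the Gaussian/Gamma integrals handle the unbounded $v,I$ directions. The term $\kay_3$ is handled by the same scheme but is milder, since only one primed argument is involved.

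Finally, for coercivity I would compute $\nu(v,I)$ from \eqref{collisionfrequency} and bound it below using the lower bound \eqref{lbofB}: the factor $\big|\omega\cdot\frac{v-v_*}{|v-v_*|}\big|$ integrates to a positive constant over $S^2$, the $\Phi_\gamma$ and $(r(1-r))^{\alpha}(1-R)^{2\alpha+1}R^{\half}$ factors integrate to a positive constant over $(0,1)^2$, and integrating $M_*$ against $|v-v_*|^{\gamma}+I^{\gamma/2}+I_*^{\gamma/2}$ yields $\nu(v,I)\geq \nu_0>0$ (indeed with growth in $|v|$ and $I$), which is the asserted coercivity. With $\kay$ bounded, compact and symmetric and $\nu\,\mathrm{Id}$ self-adjoint on $\mathrm{Dom}(\nu\,\mathrm{Id})$, the operator $\mathcal{L}=\kay-\nu\,\mathrm{Id}$ is self-adjoint on that same domain; Weyl's theorem on the stability of the essential spectrum under compact perturbations then gives $\sigma_{\mathrm{ess}}(\mathcal{L})=\sigma_{\mathrm{ess}}(-\nu\,\mathrm{Id})\subset(-\infty,-\nu_0]$, so $0\notin\sigma_{\mathrm{ess}}(\mathcal{L})$ and $\mathcal{L}$ is Fredholm.
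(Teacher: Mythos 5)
Your proposal follows essentially the same route as the paper: the same decomposition $\kay=\kay_3+\kay_2-\kay_1$ with Hilbert--Schmidt estimates for each piece obtained by extracting an explicit kernel (for $\kay_2,\kay_3$ via the change of variables $(v_*,I_*)\mapsto(v_*',I_*')$, resp.\ $(v',I')$, at fixed $(r,R,\sigma)$ and then integrating out the collision parameters), with Lemma \ref{lemma1} and hypothesis \eqref{k2condition1} closing the $(r,R)$-integrals, the same lower-bound argument from \eqref{lbofB} for the coercivity of $\nu$, and the Fredholm property deduced from compact perturbation of the coercive multiplication operator. The only cosmetic differences are that the paper computes the relevant Jacobian directly rather than through the map $\mathcal{T}$ (which it explicitly sets aside), uses Lemma \ref{lemma1} only for the $\kay_2,\kay_3$ estimates rather than for $\kay_1$, and does not invoke Weyl's theorem by name; none of this changes the substance.
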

\vspace{1mm}

\noindent We carry out the proof of the coercivity of $\nu$ Id in section \ref{sec5}, and we dedicate the rest of this section for the proof of the compactness of $\kay$. 
 
\begin{proof}
Throughout the proof, we prove the compactness of each $\kay_i$ with $i=1,\cdots,3$ separately.\vspace{2mm}

\noindent\textit{Compactness of $\kay_1$.} The compactness of $\kay_1$ is straightforward as $\kay_1$ already possesses a kernel form. Thus, we can inspect the operator kernel of $\kay_1$  $\eqref{k1}$  to be
\begin{equation*}
    k_1(v,I,v_*,I_*)=\frac{ 1}{\Gamma(\alpha+1)(2\pi)^{\frac{3}{2}}}\int_{(0,1)^2\times S^2}(r(1-r))^{\alpha}(1-R)^{2\alpha+1}  R^{1/2} I^{\frac{\alpha}{2}}I_{*}^{\frac{\alpha}{2}}\mathcal{B}e^{-\frac{1}{4}v^2_*-\frac{1}{4}v^2-\half I_*-\half I}\,\mathrm{d}r\mathrm{d}R\mathrm{d}\omega,
\end{equation*}
and therefore
\begin{equation*}
    \mathcal{K}_{1} g(v,I)=\int_{\mathbb{R}^{3}\times\mathbb{R}_+} g\left(v_{*},I_*\right) k_{1}\left(v, I,v_{*},I_*\right) \text{d}v_*\text{d}I_* \quad \forall (v,I) \in \mathbb{R}^{3}\times\mathbb{R}_+.
\end{equation*}
\newcommand{\real}{\mathbb{R}}
We give the following lemma that yields to the compactness of $\kay_1$. 
\begin{lemma}
With the assumption \eqref{boundednessofB} on $\mathcal{B}$, the function $k_1\in{L^2({\real^3\times\real_+\times\real^3\times\real_+})}.$ 
\end{lemma}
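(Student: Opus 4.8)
The plan is to estimate the $L^2(\real^3\times\real_+\times\real^3\times\real_+)$-norm of $k_1$ directly, the only real input being the upper bound \eqref{boundednessofB} on $\bb$. First I would insert \eqref{boundednessofB} into the defining integral for $k_1$ and observe that $\big|\omega\cdot\frac{v-v_*}{|v-v_*|}\big|\le 1$ while the rest of the integrand is independent of $\omega$; hence the $\omega$-integration over $S^2$ contributes only the finite constant $4\pi$ and decouples entirely from the $(v,I,v_*,I_*)$-variables. Since $\Psi_\gamma$ depends only on $(r,R)$, this leaves the pointwise bound
\begin{equation*}
k_1(v,I,v_*,I_*)\le c\,\Big(\int_{(0,1)^2}(r(1-r))^{\alpha}(1-R)^{2\alpha+1}R^{1/2}\Psi_\gamma(r,R)\,\mathrm{d}r\mathrm{d}R\Big)\, I^{\frac{\alpha}{2}}I_*^{\frac{\alpha}{2}}\big(|v-v_*|^{\gamma}+I^{\frac{\gamma}{2}}+I_*^{\frac{\gamma}{2}}\big)e^{-\frac14 v^2-\frac14 v_*^2-\frac12 I-\frac12 I_*}.
\end{equation*}

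The crucial step is to show that the $(r,R)$-integral in the bracket is finite, and this is where assumption \eqref{k2condition1} enters. I would apply Cauchy--Schwarz with the splitting weight $B(r,R)=(r(1-r))^{m}R(1-R)^{3\alpha-\gamma}$, where $m=\min\{2\alpha-1-\gamma,\alpha-1\}$, chosen precisely so that $\Psi_\gamma^2 B$ is the integrand of \eqref{k2condition1}. Writing $A=(r(1-r))^{\alpha}(1-R)^{2\alpha+1}R^{1/2}$, this bounds the integral by $\big(\int_{(0,1)^2}\Psi_\gamma^2 B\big)^{1/2}$, finite by \eqref{k2condition1}, times $\big(\int_{(0,1)^2}A^2/B\big)^{1/2}$. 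A direct computation gives $A^2/B=(r(1-r))^{2\alpha-m}(1-R)^{\alpha+2+\gamma}$, and both one-dimensional integrals converge, since $\alpha+2+\gamma>-1$ always and $2\alpha-m>-1$ (the latter because $m\le 2\alpha-1-\gamma<2\alpha+1$). Thus the bracketed constant is finite and $k_1$ is dominated by a fixed multiple of the displayed Gaussian-type profile.

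It then remains to square this pointwise bound and integrate over all four variables. Using $(a+b+c)^2\le 3(a^2+b^2+c^2)$ together with $|v-v_*|^{2\gamma}\le 2^{2\gamma}(|v|^{2\gamma}+|v_*|^{2\gamma})$, the square of the profile becomes a finite sum of terms of the form $P(v,v_*)\,I^{a}I_*^{b}\,e^{-\frac12 v^2-\frac12 v_*^2-I-I_*}$, with $P$ a polynomial and $a,b\ge 0$. The integral factorizes into Gaussian integrals over $\real^3$ (finite, as polynomials are integrable against $e^{-v^2/2}$) and Gamma integrals $\int_0^\infty I^{a}e^{-I}\,\mathrm{d}I=\Gamma(a+1)$ over $\real_+$, all finite. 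This yields $\|k_1\|_{L^2}<\infty$, hence $k_1\in L^2(\real^3\times\real_+\times\real^3\times\real_+)$ and $\kay_1$ is Hilbert--Schmidt.

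The main obstacle is the $(r,R)$-integrability step: one must engineer the Cauchy--Schwarz split so that $\Psi_\gamma^2 B$ matches \eqref{k2condition1} exactly and then check that the complementary exponents $2\alpha-m$ and $\alpha+2+\gamma$ stay above $-1$. Once the $\omega$- and $(r,R)$-integrals are absorbed into a finite constant, the estimate in the $(v,I,v_*,I_*)$-variables is routine, since the Gaussian and exponential decay dominate every polynomial factor.
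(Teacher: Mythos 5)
Your proof is correct and follows essentially the same route as the paper: insert the upper bound \eqref{boundednessofB}, absorb the $\omega$- and $(r,R)$-integrations into a constant, and reduce $\|k_1\|_{L^2}^2$ to Gaussian and Gamma integrals (the paper treats the $|v-v_*|^{2\gamma}$ factor by splitting into $|v-v_*|\leq 1$ and $|v-v_*|\geq 1$ with a ceiling/binomial expansion, while you use $|v-v_*|^{2\gamma}\leq 2^{2\gamma}(|v|^{2\gamma}+|v_*|^{2\gamma})$, an immaterial difference). Your weighted Cauchy--Schwarz step showing that $\int_{(0,1)^2}(r(1-r))^{\alpha}(1-R)^{2\alpha+1}R^{1/2}\Psi_{\gamma}\,\mathrm{d}r\mathrm{d}R$ is finite under \eqref{k2condition1} is in fact more explicit than the paper, which silently absorbs this $(r,R)$-integral into the generic constant $c$.
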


\begin{proof}
Applying Cauchy-Schwarz we get
\begin{equation*}
    \begin{aligned}
       ||{k_1}||^2_{L^2}&\leq c\int_{\real^3}\int_{\real_+}\int_{\real^3}\int_{\real_+} I^{\alpha}I_*^{\alpha}(I^{{\gamma}}+I_*^{{\gamma}}+|v-v_*|^{2\gamma}) e^{-\frac{1}{2}v^2_*-\frac{1}{2}v^2- I_*-I}\text{d}I\text{d}v\text{d}I_*\text{d}v_*\\
       &\leq c\int_{\real^3} e^{-\frac{1}{2}v^2_*}\bigg[\int_{|v-v_*|\leq 1} e^{-\frac{1}{2}v^2}\mathrm{d}v+\int_{|v-v_*|\geq 1} |v-v_*|^{\lceil 2\gamma \rceil }e^{-\frac{1}{2}v^2}\mathrm{d}v\bigg]\mathrm{d}v_*\\
      & \leq c \int_{\real^3} e^{-\frac{1}{2}v^2_*}\left[\int_{|v-v_*|\geq 1}\sum_{k=0}^{\lceil 2\gamma \rceil}|v|^{k}|v_*|^{\lceil 2\gamma \rceil-k} e^{-\frac{1}{2}v^2}\mathrm{d}v\right] \mathrm{d}v_*\\
    &\leq c\sum_{k=0}^{\lceil 2\gamma \rceil}\int_{\real^3}|v_*|^{\lceil 2\gamma \rceil-k} e^{-\frac{1}{2}v^2_*}\left[\int_{\real^3}|v|^{k} e^{-\frac{1}{2}v^2}\mathrm{d}v \right]\mathrm{d}v_*
    <\infty,
    \end{aligned}
\end{equation*}
where $\lceil 2\gamma \rceil$ is the ceiling of $2\gamma$.
\end{proof}
\vspace{0.1mm}

\noindent This implies that $\kay_1$ is a Hilbert-Schmidt operator, and thus compact. We prove now the compactness of $\kay_2$, similarly by proving it to be a 
Hilbert-Schmidt Operator.
\vspace{3mm}

\noindent\textit{Compactness of $\kay_2$.}   Additional work is required to inspect the kernel form of $\kay_2$, since the kernel is not obvious.  As a first step, we simplify the expression of $\kay_2$ by writing it in the $\sigma-$notation and taking the Jacobian \eqref{os} into consideration. We thus write $\kay_{2}$ as
\begin{equation}\label{kay2}
    \begin{aligned}
   \kay_{2}g(v,I)= \int_{\Delta}&I^{\prime -\frac{\alpha}{2}}_*e^{-\frac{I_*}{2}-\frac{1}{2}r(1-R)\big(\frac{(v-v_*)^2}{4}+I+I_*\big)-\frac{1}{4}v_*^2-\frac{1}{4}\big(\frac{v+v_*}{2}+\sqrt{{R(\frac{1}{4}(v-v_*)^2+I+I_*)}}\,\sigma\big)^2}\times\\
    &\hspace{-1cm}g\left({\frac{v+v_*}{2}-\sqrt{{R(\frac{1}{4}(v-v_*)^2+I+I_*)}}}\sigma,(1-R)(1-r)\Big[\frac{1}{4}(v-v_*)^2+I+I_*\Big]\right)\\
    &\hspace{0.5cm}\frac{1}{\Gamma(\alpha+1)(2\pi)^{\frac{3}{2}}}(r(1-r))^{\alpha}(1-R)^{2\alpha+1}  R^{1/2}I^{\frac{\alpha}{2}} I_{*}^{\alpha}\mathcal{B}\,\,{{\bigg|\sigma-\frac{v-v_*}{|v-v_*|}\bigg|^{-1}}}\mathrm{d}r\mathrm{d}R\mathrm{d}\sigma\mathrm{d}I_*\mathrm{d}v_*.
    \end{aligned}
\end{equation}

\noindent We seek then to write $\kay_2$ in its kernel form. For this, we define $h_{v,I,r,R,\sigma}$; where for simplicity the index will be omitted; as
\begin{equation*}
\begin{aligned}
    h: \real^3\times\real_{+}&\longmapsto h(\real^3\times\real_{+})\subset{\real^3\times\real_{+}}\\
    (v_*,I_*)\;\;&\longmapsto(x,y)=\Big({\frac{v+v_*}{2}-\sqrt{{R(\frac{1}{4}(v-v_*)^2+I+I_*)}}}\sigma,\\
    &\hspace{5.5cm}(1-R)(1-r)\Big[\frac{1}{4}(v-v_*)^2+I+I_*\Big]\Big),
\end{aligned}
\end{equation*} 
for fixed $v$,$I$,$r$,$R$, and $\sigma$. The function $h$ is invertible,  and $(v_*,I_*,v',I')$ can be expressed in terms of $(x,y)$ as
\begin{equation*}
    \begin{aligned}
        v_*&=2x+2\sqrt{Ray}\sigma-v,
       \quad  I_*&=ay-I-(x-v+\sqrt{Ray}\sigma)^2,
    \end{aligned}
\end{equation*}
 and
\begin{equation*}
    \begin{aligned}
        v'&=x+2\sqrt{Ray}\sigma,
      \quad
        I'&=\frac{r}{1-r}y,
    \end{aligned}
\end{equation*}
where $a=\frac{1}{(1-r)(1-R)}$.
The Jacobian of $h^{-1}$ is computed as
\begin{equation*}
    J=\left|\frac{\partial v_* \partial I_*}{\partial x \partial y}\right|=\frac{8}{(1-r)(1-R)},
\end{equation*}
and  the positivity of $I_*$ restricts the variation of 
the variables $(x,y)$ in integral \eqref{kay2}  over the space
\begin{equation}\label{hrr}
   H_{R,r,\sigma}^{v,I}=h(\real^3\times\real_+)=\{(x,y)\in\mathbb{R}^3\times\mathbb{R}_+:ay-I-(x-v+\sqrt{Ray}\sigma)^2>{0}\}.
\end{equation}
In fact, $H_{R,r,\sigma}^{v,I}$ can be explicitly expressed as 
\begin{equation*}
H_{R,r,\sigma}^{v,I}=\{(x,y)\in \mathbb{R}^3\times\mathbb{R}_+\;:\; x\in {B_{v-\sqrt{Ray}\sigma}(\sqrt{ay-I})}\text{ and } y\in ((1-r)(1-R)I,+\infty)\}.
\end{equation*}

\noindent Therefore, equation ($\ref{kay2}$) becomes

\begin{gather}\label{secondkay2}
    \kay_{2}g=\frac{1}{\Gamma(\alpha+1)(2\pi)^{\frac{3}{2}}}\int_{(0,1)^2\times S^2}\int_{H_{R,r,\sigma}^{v,I}}y^{ -\frac{\alpha}{2}}(r(1-r))^{\alpha}(1-R)^{2\alpha+1}  R^{1/2}I^{\frac{\alpha}{2}} I_{*}^{\alpha}\mathcal{B}J\Big|\sigma-\frac{v-x-\sqrt{Ray}\sigma}{|v-x-\sqrt{Ray}\sigma|}\Big|^{-1}\\g(x,y)
   e^{-\frac{ay-I-(x-v+\sqrt{Ray}\sigma)^2}{2}-\frac{r}{2(1-r)}y-\frac{1}{4}(2x+2\sqrt{Ray}\sigma-v)^2-\frac{1}{4}(x+2\sqrt{Ray}\sigma)^2} \mathrm{d}y\mathrm{d}x\mathrm{d}\sigma\mathrm{d}r\mathrm{d}R.
   \end{gather}
We now point out the kernel form of $\kay_2$ and prove after by the help of assumption \eqref{boundednessofB} that the kernel of $\kay_2$ is in $L^2(\real^3\!\!\times\!\!\preal\!\!\times\!\!\real^3\!\!\times\!\!\preal)$. Indeed, we recall the definition of $\Delta$, with $\Delta :=  (0,1)^2\times{S^2}\times\mathbb{R}_+\times\mathbb{R}^3),$ and we define $H^{v,I}$ to be
\begin{equation*}
\begin{aligned}
H^{v,I} := \{(R,r,\sigma,x,y)  \in \Delta \;:\; R\in(0,1),\; r\in(0,1),\; \sigma\in S^2,& \; x\in {B_{v-\sqrt{Ray}\sigma}(\sqrt{ay-I})} ,\\ 
&\hspace{-1.5cm}\text{ and } y\in ((1-r)(1-R)I,+\infty)\}.
\end{aligned}
\end{equation*}
We remark that $H_{R,r,\sigma}^{v,I}$ is a slice of $H^{v,I}$, and we define the slice
 $H^{v,I}_{x,y} \subset (0,1) \times (0,1) \times {S}^2$ such that $H^{v,I}=   H^{v,I}_{x,y}\times\mathbb{R}^3 \times \mathbb{R}_+$.
In particular, 
 \begin{equation}
 H^{v,I}_{x,y}=\{(r,R,\sigma)\in (0,1)\times (0,1)\times S^2 : (y,x,\sigma,r,R)\in H^{v,I} \}.
 \end{equation}
 Then by Fubini theorem, it holds that
\begin{equation}
    \begin{aligned}
    \kay_{2}g (v,I)=&\frac{1}{\Gamma(\alpha+1)(2\pi)^{\frac{3}{2}}}\int_{H^{v,I}}y^{ -\frac{\alpha}{2}}(r(1-r))^{\alpha}(1-R)^{2\alpha+1}  R^{1/2}I^{\frac{\alpha}{2}} I_{*}^{\alpha}\mathcal{B}J\Big|\sigma-\frac{v-x-\sqrt{Ray}\sigma}{|v-x-\sqrt{Ray}\sigma|}\Big|^{-1}g(x,y)\times\\
    &e^{-\frac{ay-I-(x-v+\sqrt{Ray}\sigma)^2}{2}-\frac{r}{2(1-r)}y-\frac{1}{4}(2x+2\sqrt{Ray}\sigma-v)^2-\frac{1}{4}(x+2\sqrt{Ray}\sigma)^2} \mathrm{d}r\mathrm{d}R\mathrm{d}\sigma\mathrm{d}x\mathrm{d}y\\
&= \frac{1}{\Gamma(\alpha+1)(2\pi)^{\frac{3}{2}}}\int_{\real^3\times\real_+}\int_{H^{v,I}_{x,y}} y^{ -\frac{\alpha}{2}}(r(1-r))^{\alpha}(1-R)^{2\alpha+1}  R^{1/2}I^{\frac{\alpha}{2}} I_{*}^{\alpha}\mathcal{B}J\Big|\sigma-\frac{v-x-\sqrt{Ray}\sigma}{|v-x-\sqrt{Ray}\sigma|}\Big|^{-1}\\
&\hspace{5mm}g(x,y)e^{-\frac{ay-I-(x-v+\sqrt{Ray}\sigma)^2}{2}-\frac{r}{2(1-r)}y-\frac{1}{4}(2x+2\sqrt{Ray}\sigma-v)^2-\frac{1}{4}(x+2\sqrt{Ray}\sigma)^2} \mathrm{d}r\mathrm{d}R\mathrm{d}\sigma \mathrm{d}y\mathrm{d}x.
   \end{aligned}
\end{equation}
The kernel of $\kay_2$ is thus inspected and written explicitly in the following lemma.

%\begin{align*}
%\mathcal{K}_2g(v,I) &= \iiint dr \, dR \, d\sigma \iint dv_* \, dI_*  \ B \big|\frac{\partial \omega}{\partial \sigma} \big| \exp(\cdots) g \circ h (v_*,I_*) \\
%&= \iiint dr\,dR\,d\sigma \iint_{H_{r,R,\sigma}^{v,I}} dx \, dy  \ J B\circ h^{-1} \big|\frac{\partial \omega}{\partial \sigma} \big|  \circ h^{-1} \exp(\cdots)  \circ h^{-1}g(x,y) \\
%&= \iint dx \, dy  \Big[\iiint_{H_{x,y}} dr\, dR\, d\sigma \ J B\circ h^{-1} \big|\frac{\partial \omega}{\partial \sigma} \big|  \circ h^{-1} \exp(\cdots)  \circ h^{-1} \Big] g(x,y)
%\end{align*}
%$\mathcal{K}_2$ is thus a kernel operator with kernel
%\[
%k_2(v,I,x,y) :=  \Big[\iiint_{H_{x,y}} dr\, dR\, d\sigma \ J B\circ %h^{-1} \big|\frac{\partial \omega}{\partial \sigma} \big|  \circ h^{-1} \exp(\cdots)  \circ h^{-1} \Big]
%\]
%Notice that you integrate in a wrong domain. I think it is important to write the previous lines, because it points out the the kernel form by interchanging the slices. When you show that $k_2$ is in $L^2$ you interchange back the two slices.

\begin{lemma}
With the assumption \eqref{boundednessofB} on $\mathcal{B}$, the kernel of $\kay_2$ given by 
\begin{equation*}
\begin{aligned}
    k_2(v,I,x,y)=&\frac{1}{\Gamma(\alpha+1)(2\pi)^{\frac{3}{2}}}\int_{H^{v,I}_{x,y}}y^{-\frac{\alpha}{2}}(r(1-r))^{\alpha}(1-R)^{2\alpha+1}  R^{1/2}I^{\frac{\alpha}{2}} I_{*}^{\alpha}\mathcal{B}J\left|\sigma-\frac{v-x-\sqrt{Ray}\sigma}{|v-x-\sqrt{Ray}\sigma|}\right|^{-1}\times \\
   &e^{-\frac{ay-I-(x-v+\sqrt{Ray}\sigma)^2}{2}-\frac{r}{2(1-r)}y-\frac{1}{4}(2x+2\sqrt{Ray}\sigma-v)^2-\frac{1}{4}(x+2\sqrt{Ray}\sigma)^2} \mathrm{d}r\mathrm{d}R
  \mathrm{d}\sigma  \end{aligned}
\end{equation*}

\noindent is in $L^2(\;\real^3\!\!\times\!\!\preal\!\!\times\!\! \real^3\!\!\times\!\!\preal )$. 
\end{lemma}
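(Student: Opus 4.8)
The plan is to establish $\|k_2\|_{L^2}^2=\int_{\real^3\times\preal\times\real^3\times\preal}|k_2(v,I,x,y)|^2\,\mathrm{d}v\,\mathrm{d}I\,\mathrm{d}x\,\mathrm{d}y<\infty$ by first removing the apparent $\sigma$-singularity, then reverting the collision change of variables to expose Gaussian decay, and finally isolating an $(r,R)$-integral governed by \eqref{k2condition1}. First I would note that $k_2$ is the integral of a single integrand, call it $F$, over the slice $H^{v,I}_{x,y}\subset(0,1)^2\times S^2$, whose measure is bounded by $4\pi$; Cauchy--Schwarz then gives $|k_2(v,I,x,y)|^2\le 4\pi\int_{H^{v,I}_{x,y}}|F|^2\,\mathrm{d}r\,\mathrm{d}R\,\mathrm{d}\sigma$. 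The decisive simplification comes from \eqref{boundednessofB}: writing $\hat g=\frac{v-v_*}{|v-v_*|}$ and recalling that in the $\sigma$-representation $|\sigma-\hat g|=2|\omega\cdot\hat g|$, the factor $|\omega\cdot\hat g|$ produced by \eqref{boundednessofB} exactly cancels the singular weight $|\sigma-\hat g|^{-1}$ inherited from the Jacobian \eqref{os}, leaving the bounded factor $\tfrac12\Psi_\gamma(r,R)(|v-v_*|^\gamma+I^{\gamma/2}+I_*^{\gamma/2})$. This removes the only non-integrable feature of the integrand.

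Next I would insert this bound into $\|k_2\|_{L^2}^2$, use Fubini to integrate the $(x,y)$-variables before $(r,R,\sigma)$, and revert the map $h$, i.e. change variables $(x,y)=(v'_*,I'_*)\mapsto(v_*,I_*)$. Since $h$ is a bijection of $\real^3\times\preal$ onto $H^{v,I}_{R,r,\sigma}$ with $\mathrm{d}x\,\mathrm{d}y=J^{-1}\mathrm{d}v_*\,\mathrm{d}I_*$, this recasts the integral over the full collision configuration $(v,I,v_*,I_*,r,R,\sigma)$, where the weight becomes the clean factor $e^{-I_*-I'-\frac12 v_*^2-\frac12 v'^2}$ with $v'=\tfrac{v+v_*}{2}+\sqrt{RE}\,\sigma$, $I'=r(1-R)E$ and $E=\tfrac14(v-v_*)^2+I+I_*$. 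Taking $g=v-v_*$ as a free variable, I would carry out the two elementary Gaussian integrations explicitly: integrating in $v_*$ completes a square and produces $e^{-\frac14 b^2}$ with $b=\tfrac g2+\sqrt{RE}\,\sigma$, and integrating over $\sigma\in S^2$ (using $\int_{S^2}e^{-a\,g\cdot\sigma}\,\mathrm{d}\sigma\le 8\pi e^{a|g|}$) completes a further square to yield the manifestly decaying factor $e^{-\frac1{16}(|g|-2\sqrt{RE})^2}$.

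After these integrations, the remaining integrand is bounded by
\[
c\,\Psi_\gamma^2(r,R)\,r^{2\alpha}(1-r)^{\alpha-1}R(1-R)^{3\alpha+1}\,E^{-\alpha}I^\alpha I_*^{2\alpha}\bigl(|g|^\gamma+I^{\gamma/2}+I_*^{\gamma/2}\bigr)^2\,e^{-I_*-r(1-R)E}\,e^{-\frac1{16}(|g|-2\sqrt{RE})^2},
\]
with the $(r,R)$-dependence now separated from the remaining $(g,I,I_*)$-integral (the exponents of $r$, $1-r$, $R$, $1-R$ here follow from $y^{-\alpha}=((1-r)(1-R)E)^{-\alpha}$, the square of $(r(1-r))^{\alpha}(1-R)^{2\alpha+1}R^{1/2}$, and the surviving Jacobian factor $J^{-1}$ acting on $J^2$). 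I would then estimate the $(g,I,I_*)$-integral in radial coordinates $s=|g|$, dominating the polynomial prefactor by powers of $E$ via $I,I_*\le E$ and $s^2\le 4E$, and using the decay $e^{-r(1-R)E}$ together with the localization $s\approx 2\sqrt{RE}$ (which forces $I+I_*\approx(1-R)E$) to extract the negative powers of $r(1-R)$ and of $1-R$.

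Assembling these factors should reduce the whole expression to a constant multiple of $\int_{(0,1)^2}\Psi_\gamma^2(r,R)(r(1-r))^{\min\{2\alpha-1-\gamma,\alpha-1\}}R(1-R)^{3\alpha-\gamma}\,\mathrm{d}r\,\mathrm{d}R$, which is finite precisely by \eqref{k2condition1}; here Lemma \ref{lemma1}, together with the symmetry $\Psi_\gamma(r,R)=\Psi_\gamma(1-r,R)$ from \eqref{symmetry}, is what lets me rewrite the asymmetric products of powers of $r$ and $1-r$ that arise as the symmetric quantity $(r(1-r))^{\min\{\cdot,\cdot\}}$. The hard part will be exactly this final estimate: the Gaussian decay in the kinetic variables degenerates as $R\to 1$ (and as $r\to 0,1$), so the powers of $1-R$ and $r(1-r)$ generated by the velocity integration must be tracked sharply and balanced against the prefactor $r^{2\alpha}(1-r)^{\alpha-1}R(1-R)^{3\alpha+1}$ so that the surviving exponents are exactly those appearing in \eqref{k2condition1}.
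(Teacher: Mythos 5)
Your setup coincides with the paper's own proof and is correct as far as it goes: the Cauchy--Schwarz step on the slice $H^{v,I}_{x,y}$ (whose measure is at most $4\pi$), the cancellation of $\big|\sigma-\frac{v-v_*}{|v-v_*|}\big|^{-2}$ against the factor $\big|\omega\cdot\frac{v-v_*}{|v-v_*|}\big|^{2}$ produced by squaring \eqref{boundednessofB}, the reversal of $h$ leaving a single power of the Jacobian $J$, and the exponent bookkeeping $r^{2\alpha}(1-r)^{\alpha-1}R(1-R)^{3\alpha+1}E^{-\alpha}$ all match the paper. The gap is in your final step, where the kinetic variables are integrated out. The paper avoids your difficulty entirely: it first changes $I\mapsto E$, so that $E$ becomes an integration variable \emph{independent of $v$}, drops the constraint $I\geq 0$, and integrates the clean Gaussian $\int_{\mathbb{R}^3}e^{-\frac12(\frac{v+v_*}{2}+\sqrt{RE}\sigma)^2}\,\mathrm{d}v=c$ with no radial Jacobian appearing; the remaining integral $\int_0^\infty E^\gamma e^{-r(1-R)E}\,\mathrm{d}E=c\,(r(1-R))^{-1-\gamma}$ then yields exactly the integrand of \eqref{k2condition1}, whose finiteness follows from Lemma \ref{lemma1}.

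Your route (integrate $v_*$, then $\sigma$, then $g$ in radial coordinates) does not deliver the claimed conclusion with the bound you propose. The estimate $\int_{S^2}e^{-a\,g\cdot\sigma}\,\mathrm{d}\sigma\le 8\pi e^{a|g|}$ discards the exact value $4\pi\sinh(a|g|)/(a|g|)$, i.e.\ a factor of order $(a|g|)^{-1}$. Near the localization $|g|\approx 2\sqrt{RE}$, with $a=\tfrac14\sqrt{RE}$, this discarded factor is of order $(RE)^{-1}$, and it is precisely what would compensate the radial Jacobian $s^{2}\,\mathrm{d}s$, which contributes a factor of order $RE$ there. With the crude bound, your $(g,I,I_*)$-integral produces $c(1+RE)$ where the paper gets $c$, hence an extra term of order $R\,(r(1-R))^{-2-\gamma}$ after the $E$-integration, and the final $(r,R)$-integral acquires the additional piece $\Psi_\gamma^2\, r^{2\alpha-2-\gamma}(1-r)^{\alpha-1}R^{2}(1-R)^{3\alpha-1-\gamma}$, which carries an uncontrolled extra factor $r^{-1}(1-R)^{-1}$ relative to \eqref{k2condition1}. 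This is not a cosmetic loss: for the model \eqref{model3}, where $\Psi_\gamma\equiv 1$, your extra term is integrable only if $\alpha>\frac{1+\gamma}{2}$, whereas the paper's hypothesis (and its examples) only require $\alpha>\frac{\gamma}{2}$. The observation that the localization forces $I+I_*\approx(1-R)E$ does not repair this, since the problem sits in the angular integral, not in the energy split. The fix is either to keep the exact spherical integral $4\pi\sinh(|z|)/|z|$ and use its denominator to cancel the $s^2$ Jacobian, or, more simply, to integrate in the paper's order as described above.
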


\begin{proof}
Applying Cauchy-Schwarz inequality we get 
\begin{equation*}
\begin{aligned}
    \left\lVert{k_2}\right\rVert^2_{L^2}\leq c\int_{\real^3}&\int_{\preal}\int_{\real^3}\int_{\preal}\int_{(0,1)^2\times S^2}y^{-\alpha}(r(1-r))^{2\alpha}(1-R)^{4\alpha+2}  RI^{\alpha} I_{*}^{2\alpha}J^2\mathcal{B}^2\left|\sigma-\frac{v-x-\sqrt{Ray}\sigma}{|v-x-\sqrt{Ray}\sigma|}\right|^{-2}\\
    &\hspace{1cm} e^{-[ay-I-(x-v+\sqrt{Ray}\sigma)^2]-\frac{r}{(1-r)}y-\frac{1}{2}(2x+2\sqrt{Ray}\sigma-v)^2-\frac{1}{2}(x+2\sqrt{Ray}\sigma)^2}\mathrm{d}r\mathrm{d}R\mathrm{d}\sigma\mathrm{d}y\mathrm{d}x\mathrm{d}I\mathrm{d}v.
    \end{aligned}
 \end{equation*}
 By means of $h^{-1}$ we have then
 
\begin{equation}\label{normofk2}
\begin{aligned}
    \left\lVert{k_2}\right\rVert^2_{L^2}\leq c\int_{\real^3}\int_{\preal}\int_{\real^3}\int_{\preal}\int_{(0,1)^2\times S^2}&E^{-\alpha}e^{-{I_*}-\frac{1}{2}v_*^2-r(1-R)\big(\frac{(v-v_*)^2}{4}+I+I_*\big)-\frac{1}{2}\big(\frac{v+v_*}{2}+\sqrt{{R(\frac{1}{4}(v-v_*)^2+I+I_*})}\sigma\big)^2}%{{\bigg|\sigma-\frac{v-v_*}{|v-v_*|}\bigg|^{-2}}}
    \\&\hspace{-1.9cm}r^{2\alpha}(1-r)^{{\alpha}}(1-R)^{3\alpha+2}  RI^{\alpha} I_{*}^{2\alpha}J\mathcal{B}^2\left|\sigma-\frac{v-x-\sqrt{Ray}\sigma}{|v-x-\sqrt{Ray}\sigma|}\right|^{-2} \mathrm{d}r\mathrm{d}R\mathrm{d}\sigma\mathrm{d}I_*\mathrm{d}v_*\mathrm{d}I\mathrm{d}v.
    \end{aligned}
 \end{equation}
 \noindent Using the inequalities

\begin{equation}\label{inequalities}
    {I}^{\alpha}\leq\left(\frac{1}{4}(v-v_*)^2+I+I_*\right)^{\alpha}=E^{\alpha}\;\text{ and }\;\; (v-v_*)^{2\gamma}+I^{\gamma}+I_*^{\gamma}\leq cE^{\gamma},
\end{equation}
\noindent and using assumption \eqref{boundednessofB} on $\mathcal{B}$ we get
\begin{equation}\label{1}
\begin{aligned}
    \left\lVert{k_2}\right\rVert^2_{L^2}&\leq c\int_{(0,1)^2\times S^2}\int_{\real^3}\int_{\preal}\int_{\real^3}\int_{\preal}e^{-{I_*}-\frac{1}{2}v_*^2-r(1-R)\big(\frac{(v-v_*)^2}{4}+I+I_*\big)-\frac{1}{2}\big(\frac{v+v_*}{2}+\sqrt{{R(\frac{1}{4}(v-v_*)^2+I+I_*})}\sigma\big)^2}\\
    &\hspace{3cm}\Psi^2_{\gamma}(r,R)E^{\gamma}r^{2\alpha}(1-r)^{\alpha}(1-R)^{3\alpha+2} J R I_{*}^{2\alpha} 
  \mathrm{d}I\mathrm{d}v\mathrm{d}I_*\mathrm{d}v_*\mathrm{d}r\mathrm{d}R\mathrm{d}\sigma.
  \end{aligned}
  \end{equation}
{We remark that choosing $\alpha$ to be the power of the measure of integral \eqref{qff}, is essential for eliminating $I^{\alpha}$ from \eqref{normofk2}, which is not integrable. This elimination is possible thanks to the first inequality of \eqref{inequalities}.}
  Perform now the change of variable $I\longmapsto E=I+I_*+\frac{1}{4}|v-v_*|^2$, then as $dI=dE$, \eqref{1} becomes
   \begin{equation}
  \begin{aligned}
   \left\lVert{k_2}\right\rVert^2_{L^2}& \leq c\int_{(0,1)^2\times S^2}\int_{\real^3}\int_{\preal}\int_{\real^3}\int_{\preal} e^{-{I_*}-\frac{1}{2}v_*^2-r(1-R)E-\frac{1}{2}\big(\frac{v+v_*}{2}+\sqrt{{RE}}\sigma\big)^2} \\
    &\hspace{3cm}\Psi^2_{\gamma}(r,R)r^{2\alpha}(1-r)^{\alpha-1}(1-R)^{3\alpha+1}  R I_{*}^{{2\alpha}}E^{\gamma}\mathrm{d}E\mathrm{d}v\mathrm{d}I_*\mathrm{d}v_*\mathrm{d}r\mathrm{d}R\mathrm{d}\sigma\\
    &=c\int_{(0,1)^2}\int_{\real^3}\int_{\preal}\int_{\preal} e^{-{I_*}-\frac{1}{2}v_*^2-r(1-R)E}\left[\int_{S^2}\int_{\real^3}e^{-\frac{1}{2}\big(\frac{v+v_*}{2}+\sqrt{{RE}}\sigma\big)^2}\mathrm{d}v\mathrm{d}\sigma\right] \\
    &\hspace{3cm}\Psi^2_{\gamma}(r,R)r^{2\alpha}(1-r)^{\alpha-1}(1-R)^{3\alpha+1}  R I_{*}^{{2\alpha}}E^{\gamma}\mathrm{d}E\mathrm{d}I_*\mathrm{d}v_*\mathrm{d}r\mathrm{d}R.
    \end{aligned}
 \end{equation}
Let $\Tilde{V}=\frac{v}{2}+\frac{v_*}{2}+\sqrt{{RE}}\sigma$, then 

   \begin{equation}\label{3}
  \begin{aligned}
   \left\lVert{k_2}\right\rVert^2_{L^2}& 
    \leq c\int_{(0,1)^2}\int_{\real^3}\int_{\preal}\int_{\preal} e^{-{I_*}-\frac{1}{2}v_*^2-r(1-R)E}\left[\int_{\real^3}\int_{S^2}e^{-\frac{1}{2}\Tilde{V}^2}\mathrm{d}\Tilde{V}\mathrm{d}\sigma\right]\\
    &\hspace{3cm}\Psi^2_{\gamma}(r,R)r^{2\alpha}(1-r)^{\alpha-1}(1-R)^{3\alpha+1}  R I_{*}^{{2\alpha}}E^{\gamma}\mathrm{d}E\mathrm{d}I_*\mathrm{d}v_*\mathrm{d}r\mathrm{d}R.
    \end{aligned}
 \end{equation}
  Therefore the integral in \eqref{3} becomes
 \begin{equation}
  \begin{aligned}
    \left\lVert{k_2}\right\rVert^2_{L^2}& 
    \leq \int_{(0,1)^2}\left[\int_{\real_+}E^{\gamma}e^{-r(1-R)E}dE\right]\Psi^2_{\gamma}(r,R) r^{2\alpha}(1-r)^{\alpha-1}(1-R)^{3\alpha+1}  R drdR\\
      &\leq c\int_{(0,1)^2}\Psi^2_{\gamma}(r,R)r^{2\alpha-1-\gamma}(1-r)^{\alpha-1}R(1-R)^{3\alpha-\gamma}  \mathrm{d}r\mathrm{d}R<\infty.
 \end{aligned}
 \end{equation}
\end{proof}
\bigskip

%\noindent In the last step of the above proof, we notice that the assumption on the collision cross section \eqref{boundednessofB} may be reduced to 
%$$\Psi_{\gamma}^2(r,R)r^{2\alpha-1-\gamma}(1-r)^{\alpha-1}R(1-R)^{3\alpha-\gamma}\in  L^1((0,1)^2).$$ Yet, for the compactness of $\kay_3$ in our approach, we will further need that 
%\begin{equation*}
 %  {\Psi^2_{\gamma}}(r,R)(1-r)^{2\alpha-1-\gamma}r^{\alpha-1}R(1-R)^{3\alpha-\gamma} \quad \in L^1((0,1)^2).
%\end{equation*}
%Therefore, by lemma \ref{lemma1}, assumption \eqref{k2condition2} is crucial for $\kay$ to be compact.

\noindent\textit{Compactness of $\kay_3$.} The proof of the compactness of $\kay_3$ \eqref{k3} is similar to that of $\kay_2$. The operator $\kay_3$ which has the explicit form
  \begin{equation*}
    \begin{aligned}
    \kay_{3}g(v,I)=\int_{\Delta}\,\,&e^{-\frac{I_*}{2}-\frac{1}{2}(1-r)(1-R)\big(\frac{(v-v_*)^2}{4}+I+I_*\big)}e^{-\frac{1}{4}v_*^2-\frac{1}{4}\big(\frac{v+v_*}{2}-\sqrt{{R(\frac{1}{4}(v-v_*)^2+I+I_*)}}\,\sigma\big)^2}\\
    & I^{\prime -\alpha}g\Big({\frac{v+v_*}{2}+\sqrt{{R(\frac{1}{4}(v-v_*)^2+I+I_*)}}}\sigma,r(1-R)\Big[\frac{1}{4}(v-v_*)^2+I+I_*\Big]\Big)\\
   &\hspace{0.5cm}\frac{1}{\Gamma(\alpha+1)(2\pi)^{\frac{3}{2}}}(r(1-r))^{\alpha}(1-R)^{2\alpha+1}  R^{1/2}I^{\frac{\alpha}{2}} I_{*}^{\alpha}\mathcal{B}\,\,{{\bigg|\sigma-\frac{v-v_*}{|v-v_*|}\bigg|^{-1}}}\mathrm{d}r\mathrm{d}R\mathrm{d}\sigma\mathrm{d}I_*\mathrm{d}v_*,
    \end{aligned}
\end{equation*}

\noindent inherits the same form as $\kay_2$, with a remark that the Jacobian of the needed transformation 
  \begin{equation*}
\begin{aligned}
   \tilde{ h}: \real^3\times\real_{+}&\longmapsto\real^3\times\real_{+}\\
    (v_*,I_*)&\longmapsto(x,y)=\Big({\frac{v+v_*}{2}+\sqrt{{R(\frac{1}{4}(v-v_*)^2+I+I_*)}}}\sigma,\\
    &\hspace{7cm}r(1-R)\Big[\frac{1}{4}(v-v_*)^2+I+I_*\Big]\Big),
\end{aligned}
\end{equation*} 
    is calculated to be 
    \begin{equation*}
        \tilde{J}=\frac{8}{r(1-R)}.
    \end{equation*}
For the kernel of $\kay_3$ to be $L^2$ integrable, the final computations require 
\begin{equation}\label{k3condition1}
   {\Psi^2_{\gamma}}(r,R)(1-r)^{2\alpha-1-\gamma}r^{\alpha-1}R(1-R)^{3\alpha-\gamma} \quad \in L^1((0,1)^2), 
\end{equation}
As $\Psi_{\gamma}(r,R)=\Psi_{\gamma}(1-r,R)$ by \eqref{symmetry}, then \eqref{k3condition1} is satisfied.
\end{proof}
\noindent  To this extent, {the perturbation operator $\kay$ is proved to be Hilbert-Schmidt}, and thus $\kay$ is a {bounded }compact operator. As a result, the linearized operator $\mathcal{L}$ is a {self adjoint} operator.
\section{Properties of the Collision Frequency}\label{sec5}
We give in this section some properties of $\nu$. The first is the coercivity property, which implies that $\mathcal{L}$ is a Fredholm operator, and we prove the monotony of $\nu$ which depends on the choice of the collision cross section $\mathcal{B}$. The latter property is used for locating the essential spectrum of $\mathcal{L}$.
\begin{proposition}[Coercivity of $\nu$ Id]\label{coercivity}
 With the assumption \eqref{lbofB}, there exists $c>0$ such that
 $$\nu(v,I)\geq c(|v|^{\gamma}+I^{\frac{\gamma}{2}}+1),$$
 for any $\gamma\geq0$.
 As a result, the multiplication operator $\nu$ \emph{Id} is coercive.
 \end{proposition}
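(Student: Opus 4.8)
The plan is to establish the pointwise lower bound $\nu(v,I)\geq c(|v|^{\gamma}+I^{\gamma/2}+1)$ directly from the definition \eqref{collisionfrequency} of the collision frequency, using the lower bound \eqref{lbofB} on $\mathcal{B}$, and then to note that the coercivity of the multiplication operator $\nu\,\text{Id}$ follows immediately. First I would insert assumption \eqref{lbofB} into \eqref{collisionfrequency}; after cancelling the factors $I^{-\alpha}I^{\alpha}$ the integrand is nonnegative and bounded below by
\begin{equation*}
\nu(v,I)\geq \int_{\Delta}\frac{M_*}{I_*^{\alpha}}\,(r(1-r))^{\alpha}(1-R)^{2\alpha+1}R^{1/2}I_*^{\alpha}\,\Phi_{\gamma}(r,R)\bigg|\omega\cdot\frac{v-v_*}{|v-v_*|}\bigg|\Big(|v-v_*|^{\gamma}+I^{\gamma/2}+I_*^{\gamma/2}\Big)\,\mathrm{d}r\mathrm{d}R\mathrm{d}\omega\mathrm{d}I_*\mathrm{d}v_*.
\end{equation*}
Writing $M_*=\frac{1}{(2\pi)^{3/2}\Gamma(\alpha+1)}I_*^{\alpha}e^{-v_*^2/2-I_*}$ and factoring out the $(r,R,\omega)$ integrals, the constant $c_0:=\int_{(0,1)^2\times S^2}(r(1-r))^{\alpha}(1-R)^{2\alpha+1}R^{1/2}\Phi_{\gamma}(r,R)\,|\omega\cdot e|\,\mathrm{d}r\mathrm{d}R\mathrm{d}\omega$ is a finite strictly positive number independent of $(v,I)$ (the $\omega$-integral of $|\omega\cdot e|$ over $S^2$ is the same constant for every unit vector $e$, and the $(r,R)$ integral converges and is positive since $\Phi_\gamma\le\Psi_\gamma$ and \eqref{k2condition1} controls the tail). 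This reduces the problem to bounding below the velocity-and-energy integral
\begin{equation*}
\int_{\real^3}\int_{\preal} e^{-v_*^2/2-I_*}\Big(|v-v_*|^{\gamma}+I^{\gamma/2}+I_*^{\gamma/2}\Big)\,\mathrm{d}I_*\mathrm{d}v_*.
\end{equation*}

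Next I would bound this remaining integral from below by three separate contributions, one for each summand. The terms $I^{\gamma/2}$ and the constant $1$ (from $I_*^{\gamma/2}$, which integrates to a positive constant against the Gaussian-times-exponential weight) are the easy ones: dropping the other nonnegative summands gives $\int e^{-v_*^2/2-I_*}I^{\gamma/2}\,\mathrm{d}I_*\mathrm{d}v_*=c_1 I^{\gamma/2}$ and $\int e^{-v_*^2/2-I_*}I_*^{\gamma/2}\,\mathrm{d}I_*\mathrm{d}v_*=c_2>0$, a fixed positive constant that yields the $+1$ in the claimed bound. The genuinely substantive term is $|v-v_*|^{\gamma}$, which must produce the growth $|v|^{\gamma}$. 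For this I would use the elementary inequality that for large $|v|$ the Gaussian mass of $v_*$ concentrates near the origin, so that $|v-v_*|\ge \tfrac12|v|$ on the region $|v_*|\le\tfrac12|v|$, whence
\begin{equation*}
\int_{\real^3}e^{-v_*^2/2}|v-v_*|^{\gamma}\,\mathrm{d}v_*\geq \Big(\tfrac12|v|\Big)^{\gamma}\int_{|v_*|\le |v|/2}e^{-v_*^2/2}\,\mathrm{d}v_*\geq c_3\,|v|^{\gamma}
\end{equation*}
for $|v|\ge 1$, while for $|v|\le 1$ the constant term already dominates $|v|^{\gamma}$. Combining the three contributions gives $\nu(v,I)\ge c(|v|^{\gamma}+I^{\gamma/2}+1)$ with $c=c_0\min\{c_1,c_2,c_3\}/\text{const}$, after absorbing the small-$|v|$ regime into the constant.

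Finally, coercivity of $\nu\,\text{Id}$ follows at once: since $\nu(v,I)\ge c>0$ uniformly, one has $\langle \nu\,\text{Id}\,g,g\rangle=\int \nu|g|^2\ge c\|g\|_{L^2}^2$ for all $g\in\mathrm{Dom}(\nu\,\text{Id})$, which is the coercivity statement. The main obstacle is the $|v-v_*|^{\gamma}$ term: I must verify the constant $c_0$ is genuinely positive and finite (using $\Phi_\gamma>0$ a.e.\ together with \eqref{k2condition1} and $\Phi_\gamma\le\Psi_\gamma$ to guarantee integrability near the endpoints $r,R\in\{0,1\}$), and I must make the concentration argument uniform enough to extract the correct power $|v|^{\gamma}$ for all $\gamma\ge 0$ rather than merely an exponentially weaker bound; handling the small-velocity regime so that the $+1$ and the $|v|^\gamma$ terms fit under a single constant is the only place requiring care.
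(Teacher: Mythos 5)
Your proof is correct and follows essentially the same route as the paper: insert the lower bound \eqref{lbofB} into \eqref{collisionfrequency}, factor the $(r,R,\omega,I_*)$ integrals into a positive constant, and extract $|v|^{\gamma}$ from the Gaussian integral of $|v-v_*|^{\gamma}$ by restricting to $|v_*|\leq \tfrac{1}{2}|v|$ when $|v|\geq 1$, treating small $|v|$ separately. The only differences are cosmetic: the paper obtains the $+1$ in the regime $|v|\leq 1$ by integrating $(|v_*|-|v|)^{\gamma}$ over $|v_*|\geq 2$ rather than from your $I_*^{\gamma/2}$ term, and your concern about the finiteness of $c_0$ (and its justification via \eqref{k2condition1}) is unnecessary, since for a lower bound only the strict positivity of $c_0$ matters, which follows from the a.e.\ positivity of $\Phi_{\gamma}$; also note you dropped the weight $I_*^{\alpha}$ coming from $M_*$, which is harmless since $\int_{\mathbb{R}_+} I_*^{\alpha}e^{-I_*}\,\mathrm{d}I_*$ is a finite positive constant.
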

 \begin{proof} The collision frequency \eqref{collisionfrequency} is
 \begin{equation*}
\begin{aligned}
{\nu}(v,I)=\frac{1}{\Gamma(\alpha+1)(2\pi)^{\frac{3}{2}}}\int_{\Delta} \mathcal{B} I_*^{\alpha}(r(1-r))^{\alpha}(1-R)^{2\alpha+1}  R^{1/2}  e^{-I_*-\frac{1}{2} v^2_*} \, \mathrm{d}r\mathrm{d}R\mathrm{d}\omega\mathrm{d}I_*\mathrm{d}v_*,
\end{aligned}
\end{equation*}
where by $\eqref{lbofB}$ we get
\begin{equation*}
\begin{aligned}
{\nu}(v,I)&\geq c\int_{\mathbb{R}^3}\left( |v-v_*|^{\gamma}+I^{\gamma/2}\right) e^{-\frac{1}{2} v^2_*} \, \mathrm{d}v_*\\
&\geq c\Big(I^{\gamma/2}+\int_{\mathbb{R}^3} ||v|-|v_*||^{\gamma} e^{-\frac{1}{2} v^2_*} \,\mathrm{d}v_*\Big),
%&\geq c(1+|v|).
\end{aligned}
\end{equation*}
where $c$ is a generic constant.
We consider the two cases, $|v|\geq 1$ and $|v|\leq 1$.
If $|v|\geq 1$ we have
\begin{equation*}
    \begin{aligned}
    \nu(v,I)&\geq c\Big(I^{\gamma/2}+\int_{|v_*|\leq \frac{1}{2}|v|} (|v|-|v_*|)^{\gamma} e^{-\frac{1}{2} v^2_*} \,\mathrm{d}v_*\Big)
    \geq c\Big(I^{\gamma/2}+|v|^{\gamma}\int_{|v_*|\leq \frac{1}{2}}  e^{-\frac{1}{2} v^2_*}\mathrm{d}v_*\Big)\geq c(|v|^{\gamma}+I^{\gamma/2}+1). 
    \end{aligned}
\end{equation*}
For $|v|\leq 1$,
\begin{equation*}
    \begin{aligned}
    \nu(v,I)&\geq c\Big(I^{\gamma/2}+\int_{|v_*|\geq 2}  (|v_*|-|v|)^{\gamma}e^{-\frac{1}{2} v^2_*} \,\mathrm{d}v_*\Big)
    \geq c\Big(I^{\gamma/2}+|v|^{\gamma}\int_{|v_*|\geq 2}  e^{-\frac{1}{2} v^2_*} \,\mathrm{d}v_*\Big)\geq c(1+I^{\gamma/2}+|v|^{\gamma}).
    \end{aligned}
\end{equation*}
 \end{proof}
The result is thus proved. We give now the following proposition, which is a generalization of the work of Grad \cite{grad}, in which he proved that the collision frequency of monoatomic single gases is monotonic based on the choice of the collision cross section $\bb$.

\begin{proposition}[monotony of $\nu$]
Under the assumption that 
\begin{equation}\label{monotonicity}
   \int_{(0,1)^2\times S^2}(r(1-r))^{\alpha}(1-R)^{2\alpha+1}  R^{1/2} \mathcal{B}(|V|,I,I_*,r,R,\sigma) \mathrm{d}r\mathrm{d}R\mathrm{d}\omega
\end{equation}
 is increasing (respectively decreasing) in $|V|$ and $I$ for every $I_*$,
the collision frequency $\nu$ is increasing (respectively decreasing), where $|V|=|v-v_*|$.
\vspace{2mm}

\noindent We remark that if $\mathcal{B}$ is increasing (respectively decreasing) in $|V|$ and $I$, then \eqref{monotonicity} is increasing (respectively decreasing) in $|V|$ and $I$.
\vspace{2mm}

\noindent In particular, for Maxwell molecules, where $\mathcal{B}$ is constant in $|V|$ and $I$, $\nu$ is constant. On the other hand, for collision cross-sections of the form 
\begin{gather*}\label{vv}
   \bb(v,v_*,I,I_*,r,R,\omega)= \Phi_{\gamma}(r,R)\bigg|\omega.{\frac{(v-v_*)}{|v-v_*|}}\bigg|\Big({|v-v_*|}^{{\gamma}}+I^{\frac{\gamma}{2}}+I_*^{\frac{\gamma}{2}}\Big), 
 \end{gather*}
{ the integral \eqref{monotonicity} is increasing, and thus $\nu$ is increasing}, where $C>0$, $\gamma\geq 0$, and $\Phi_{\gamma}$ is a positive function such that
$$\Phi(r,R)=\Phi(1-r,R),$$
and
$$\Phi_{\gamma}(r,R)(r(1-r))^{\alpha}R^{\frac{1}{2}}(1-R)^{2\alpha+1} \in L^1((0,1)^2).$$ \end{proposition}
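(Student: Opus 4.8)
The plan is to prove the monotony of $\nu$ directly from its integral representation, reducing the question to a sign-of-derivative (or comparison) argument on the inner integral already isolated in the hypothesis \eqref{monotonicity}. First I would recall from \eqref{collisionfrequency} that, after performing the $\mathrm{d}r\,\mathrm{d}R\,\mathrm{d}\omega$ integration first, the collision frequency can be written as
\begin{equation*}
\nu(v,I)=\frac{1}{\Gamma(\alpha+1)(2\pi)^{\frac{3}{2}}}\int_{\mathbb{R}^3}\int_{\mathbb{R}_+} \Theta(|v-v_*|,I,I_*)\,I_*^{\alpha}\,e^{-I_*-\frac{1}{2}v_*^2}\,\mathrm{d}I_*\,\mathrm{d}v_*,
\end{equation*}
where $\Theta(|V|,I,I_*)$ denotes exactly the inner integral \eqref{monotonicity}. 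The whole point is that the weight $I_*^{\alpha}e^{-I_*-\frac{1}{2}v_*^2}$ is nonnegative and independent of $v$ and $I$, so $\nu$ inherits any monotonicity in $|v|$ and $I$ that $\Theta$ possesses, provided one can pass the monotonicity through the $v_*$-integration.

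The key step is to handle the $|v|$-direction, since $\Theta$ depends on $v$ only through $|v-v_*|$ rather than $|v|$ directly. For the dependence on $I$ this is immediate: if $\Theta$ is increasing in $I$ for every fixed $|V|$ and $I_*$, then the integrand is pointwise increasing in $I$, and integrating the nonnegative weight preserves the inequality, so $\nu$ is increasing in $I$. For the $|v|$ dependence I would fix two radii $|v_1|\le |v_2|$ and compare $\nu(v_1,I)$ with $\nu(v_2,I)$; by rotational invariance of the Gaussian weight I may align $v_1,v_2$ with a common axis, and then argue that the map $v\mapsto \int \Theta(|v-v_*|,\cdot)\,\mathrm{d}\mu(v_*)$ is a radially increasing function whenever $r\mapsto\Theta(r,\cdot)$ is increasing and the measure $\mathrm{d}\mu(v_*)=I_*^{\alpha}e^{-I_*-\frac{1}{2}v_*^2}\mathrm{d}I_*\mathrm{d}v_*$ is radially symmetric. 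This is a standard rearrangement/convolution-with-radial-kernel fact; the decreasing case follows by reversing every inequality.

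For the concrete examples I would simply verify the hypothesis on $\Theta$. For Maxwell molecules $\mathcal{B}$ is constant in $|V|$ and $I$, so $\Theta$ is constant and hence $\nu$ is constant. For the cross-section of the stated form, after factoring out $\bigl|\omega\cdot\frac{v-v_*}{|v-v_*|}\bigr|$ and integrating it against $\mathrm{d}\omega$ (which contributes a finite positive constant by symmetry), the remaining $r,R$ integration is finite precisely by the assumed integrability $\Phi_{\gamma}(r,R)(r(1-r))^{\alpha}R^{\frac{1}{2}}(1-R)^{2\alpha+1}\in L^1((0,1)^2)$, and what survives is a positive constant multiple of $\bigl(|V|^{\gamma}+I^{\gamma/2}+I_*^{\gamma/2}\bigr)$, which is manifestly increasing in both $|V|$ and $I$ for $\gamma\ge 0$. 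Hence \eqref{monotonicity} is increasing and the general result applies.

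The main obstacle I anticipate is the rigorous justification that radial monotonicity of the kernel $\Theta$ in $|v-v_*|$ transfers to radial monotonicity of the $v_*$-average in $|v|$; unlike the $I$-direction this is not a pointwise comparison, since moving $v$ outward increases $|v-v_*|$ for some $v_*$ but decreases it for others. I would resolve this by exploiting the radial symmetry and unimodality (monotone decay) of the Gaussian factor $e^{-\frac{1}{2}v_*^2}$, so that the convolution of an increasing radial profile with a radially decreasing probability weight remains radially increasing — the technical heart of the argument, everything else being finiteness checks supplied by the integrability assumptions.
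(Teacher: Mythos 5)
Your proposal is correct, but it reaches the conclusion by a genuinely different route than the paper. You transfer the monotonicity of the inner integral \eqref{monotonicity} to $\nu$ by a direct comparison: the $I$-direction is a pointwise inequality under a fixed nonnegative weight, and the $|v|$-direction rests on the lemma that convolving a radially increasing profile with a radially symmetric, radially decreasing kernel (here the Gaussian, for each fixed $I_*$) yields a radially increasing function. The paper instead differentiates: writing $\nu$ as a radial function via $V=v-v_*$, it computes $\sum_i v_i\partial_{v_i}\nu$ and $\partial_I\nu$ and fixes their signs by pairing $V$ with $-V$, using
\[
\frac{v\cdot V}{|V|}\Big(e^{-\frac{1}{2}(v-V)^2}-e^{-\frac{1}{2}(v+V)^2}\Big)\;\geq\;0 ,
\]
so that the radial derivative of $\nu$ carries the sign of $\int (r(1-r))^{\alpha}(1-R)^{2\alpha+1}R^{1/2}\,\frac{\partial\mathcal{B}}{\partial |V|}\,\mathrm{d}r\,\mathrm{d}R\,\mathrm{d}\omega$, and similarly for $I$. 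The two mechanisms are close in spirit --- the paper's $V\mapsto -V$ pairing is precisely the infinitesimal version of your convolution lemma --- but the trade-offs differ. Your route never differentiates $\mathcal{B}$, so it uses the hypothesis exactly as stated (pure monotonicity) and is slightly more general; its cost is that the ``standard fact'' you invoke is the entire content and should be proved, e.g.\ by layer cake: write the increasing profile as $\theta(0^+)+\int_0^{\infty}\mathbf{1}\{\theta>t\}\,\mathrm{d}t$, note each superlevel set is the complement of a centered ball, and use $\mathbf{1}_{B^c}\ast\rho=\|\rho\|_{L^1}-\mathbf{1}_{B}\ast\rho$ together with the elementary fact that $\mathbf{1}_{B_r}\ast\mathbf{1}_{B_s}(x)=|B_r\cap(x+B_s)|$ decreases in $|x|$; this also settles the decreasing case, and all integrals are finite thanks to the polynomial growth bound \eqref{boundednessofB}. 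The paper's computation is self-contained and elementary, but implicitly assumes differentiability of $\mathcal{B}$ in $|V|$ and $I$ and differentiation under the integral sign. Your handling of the examples (Maxwell molecules; the factorization of the $\omega$- and $(r,R)$-integrals into finite positive constants, leaving the manifestly increasing factor $|V|^{\gamma}+I^{\gamma/2}+I_*^{\gamma/2}$) coincides with the paper's.
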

\vspace{1mm}

{ \noindent In fact, if $\Phi_{\gamma}$ for instance satisfies
$$ \Phi_{\gamma}^2(r,R)(r(1-r))^{2\alpha-1-\gamma}R(1-R)^{3\alpha-\gamma} \in L^1((0,1)^2)$$
then this collision cross section satisfies our main assumptions \eqref{lbofB} and \eqref{boundednessofB}.}

\begin{proof}
We remark first that $\nu$ is a radial function in $|v|$ and I. In fact, we perform the change of variable $V=v-v_*$ in the integral \eqref{collisionfrequency}, where the expression of $\nu$ becomes 
\begin{equation}\label{nuV}
    \nu(|v|,I)=\frac{1}{\Gamma(\alpha+1)(2\pi)^{\frac{3}{2}}}\int_{\Delta}\mathcal{B}(|V|,I,I_*,r,R,\omega)I_*^{\alpha}(r(1-r))^{\alpha}(1-R)^{2\alpha+1}  R^{1/2}e^{-\frac{1}{2}(v-V)^2-I_*} \mathrm{d}r\mathrm{d}R\mathrm{d}\omega\mathrm{d}I_*\mathrm{d}V,
\end{equation}
where 
$\Delta=\mathbb{R}^3\times\mathbb{R}_+\times{S^2}\times(0,1)^2$.
The integration in $V$ in the above integral \eqref{nuV} is carried in the spherical coordinates of $V$, with fixing one of the axes of the reference frame along $v$, and therefore, the above integral will be a function of $|v|$ and $I$.  
\vspace{1mm}

\noindent The partial derivative of $\nu$ in the $v_i$ direction is

\begin{equation}\label{pdofnu}
    \frac{\partial{\nu}}{\partial{v_i}}=\int \frac{I_*^{\alpha}(r(1-r))^{\alpha}(1-R)^{2\alpha+1}  R^{1/2}}{\Gamma(\alpha+1)(2\pi)^{\frac{3}{2}}}  \frac{v_i-v_{*i}}{|v-v_{*}|}\frac{\partial\mathcal{B}}{\partial|v-v_*|}(|v-v_*|,I,I_*,r,R,\omega)e^{-\frac{1}{2}v^2_*-I_*}\mathrm{d}r\mathrm{d}R\mathrm{d}\omega\mathrm{d}I_*\mathrm{d}v_*.
\end{equation}
%where the prime is referred to the derivative of $\mathcal{B}$ with respect to $|v-v_*|$.
Perform the change of variable $V=v-v_*$ in \eqref{pdofnu}, then
\begin{equation*}
    \frac{\partial{\nu}}{\partial{v_i}}=\int\frac{I_*^{\alpha}(r(1-r))^{\alpha}(1-R)^{2\alpha+1}  R^{1/2}}{\Gamma(\alpha+1)(2\pi)^{\frac{3}{2}}} \frac{V_i}{|V|}\frac{\partial\mathcal{B}}{\partial|V|}(|V|,I,I_*,r,R,\omega)e^{-\frac{1}{2}(v-V)^2-I_*}\mathrm{d}r\mathrm{d}R\mathrm{d}\omega\mathrm{d}I_*\mathrm{d}V,
\end{equation*}
and thus, 
\begin{equation}\label{5.32}
    \sum_{i=1}^{3} v_i\frac{\partial{\nu}}{\partial{v_i}}=\int\frac{I_*^{\alpha}(r(1-r))^{\alpha}(1-R)^{2\alpha+1}  R^{1/2}}{\Gamma(\alpha+1)(2\pi)^{\frac{3}{2}}} \frac{v.V}{|V|}\frac{\partial\mathcal{B}}{\partial|V|}(|V|,I,I_*,r,R,\omega)e^{-\frac{1}{2}(v-V)^2-I_*}\mathrm{d}r\mathrm{d}R\mathrm{d}\omega\mathrm{d}I_*\mathrm{d}V.
\end{equation}
Applying Fubini theorem, we write \eqref{5.32} as
\begin{gather*}
    \sum_{i=1}^{3} v_i\frac{\partial{\nu}}{\partial{v_i}}=\int \bigg[\int (r(1-r))^{\alpha}(1-R)^{2\alpha+1}  R^{1/2}\frac{\partial\mathcal{B}}{\partial|V|}(|V|,I,I_*,r,R,\omega)\mathrm{d}r\mathrm{d}R\mathrm{d}\omega\bigg]\\\frac{I_*^{\alpha}}{\Gamma(\alpha+1)(2\pi)^{\frac{3}{2}}}\frac{v.V}{|V|}e^{-\frac{1}{2}(v-V)^2-I_*}\mathrm{d}I_*\mathrm{d}V.
\end{gather*}
\noindent The partial derivative of $\nu$ along $I$ is 
\begin{equation}\label{pdwrtI}
\begin{aligned}
 I\frac{\partial{\nu}}{\partial{I}}&=\int \frac{I_*^{\alpha}(r(1-r))^{\alpha}(1-R)^{2\alpha+1}  R^{1/2}}{\Gamma(\alpha+1)(2\pi)^{\frac{3}{2}}}I\frac{\partial {\mathcal{B}}}{\partial I}(|V|,I,I_*,r,R,\sigma)e^{-\frac{1}{2}(v-V)^2-I_*}\mathrm{d}r\mathrm{d}R\mathrm{d}\sigma\mathrm{d}I_*\mathrm{d}V\\
 &=\int \frac{I_*^{\alpha}}{\Gamma(\alpha+1)(2\pi)^{\frac{3}{2}}}I\Big[\int {(r(1-r))^{\alpha}(1-R)^{2\alpha+1}  R^{1/2}}\frac{\partial {\mathcal{B}}}{\partial I}(|V|,I,I_*,r,R,\sigma)\mathrm{d}r\mathrm{d}R\mathrm{d}\sigma\Big]e^{-\frac{1}{2}(v-V)^2-I_*}\mathrm{d}I_*\mathrm{d}V.
 \end{aligned}
\end{equation}
When $v.V>0,$ the exponential in the integral \eqref{5.32} is greater than when $v.V<0$, and so the term $v.V$ doesn't affect the sign of the partial derivatives of $\nu$. Therefore, the sign of the partial derivative of $\nu$ along  $|v|$ has the same sign as  
$$\int(r(1-r))^{\alpha}(1-R)^{2\alpha+1}  R^{1/2}\frac{\partial\mathcal{B}}{\partial|V|}(|V|,I,I_*,r,R,\omega)\mathrm{d}r\mathrm{d}R\mathrm{d}\omega.$$
It's clear as well that the partial derivative of $\nu$ with respect to $I$ \eqref{pdwrtI} has the same sign as
$$\int(r(1-r))^{\alpha}(1-R)^{2\alpha+1}  R^{1/2}\frac{\partial\mathcal{B}}{\partial I}(|V|,I,I_*,r,R,\omega)\mathrm{d}r\mathrm{d}R\mathrm{d}\omega.$$
As a result, for a collision cross-section $\mathcal{B}$ satisfying the condition that the integral 
   \begin{equation*}
    \int_{(0,1)^2\times S^2}(r(1-r))^{\alpha}(1-R)^{2\alpha+1}  R^{1/2} \mathcal{B}(|V|,I,I_*,r,R,\sigma) \mathrm{d}r\mathrm{d}R\mathrm{d}\sigma
\end{equation*}
 is increasing (respectively decreasing) in $|V|$ and $I$, the collision frequency is increasing (respectively decreasing).
\end{proof}


\begin{thebibliography}{}
 
\bibitem{30}P. Andries, P. Le Tallec, J Perlat, B. Perthame.\emph{ The Gaussian-BGK Model of Boltzmann Equation with Small Prandtl Number.} Eur. J. Mech. B/Fluids (2000).


%\bibitem{20}
%T. Arima, S. Taniguchi, T. Ruggeri \and M. Sugiyama,\emph{ Extended thermodynamics of dense gases. Continuum Mechanics and Thermodynamics }24 (4-6): 271-292, 2012.

\bibitem{brull} 
C. Baranger, M. Bisi, S. Brull, L. Desvillettes: \emph{On the Chapman-Enskog asymptotics for a mixture of monoatomic and polyatomic rarefied gases Kinetic and Related Models}, 11 (4): 821-858, 2018.
\bibitem{bm}
 C. Baranger and C. Mouhot. \emph{Explicit spectral gap estimates for the linearized Boltzmann
and Landau operators with hard potentials.} Rev. Mat. Iberoamericana, 21(3):819-841, 2005.
\bibitem{Niclas}
N. Bernhoff, \emph{Linearized Boltzmann Collision Operator, Polyatomic Molecules Modeled by a Continuous Internal Energy Variable,} arXiv:2201.01377v1, 2022.
\bibitem{bisic}
M. Bisi, M.J. Caceres, \emph{A BGK relaxation model for polyatomic gas mixtures},
Commun. Math. Sci. 297-325, 2016.
\bibitem{b1}
 A. V. Bobylev. \emph{The method of the Fourier transform in the theory of the Boltzmann
equation for Maxwell molecules.} Dokl. Akad. Nauk SSSR, 225(6):1041-1044, 1975.
\bibitem{b2}
A. V. Bobylev. \emph{The theory of the nonlinear spatially uniform Boltzmann equation for
Maxwell molecules. } Mathematical physics reviews, Vol. 7, 111-233. Harwood Academic Publ, 1988.
\bibitem{8}
C. Borgnakke, P.S. Larsen,
\emph{Statistical collision model for Monte Carlo simulation of polyatomic gas mixture},
Journal of Computational Physics,
18 (4): 405-420, 1975.
\bibitem{borsonii}
T. Borsoni, L. Boudin and F. Salvarani, Compactness property of the
linearized Boltzmann operator for a polyatomic gas undergoing resonant
collisions, arXiv: 2204.10551.
\bibitem{milana2}
 L. Boudin, B. Grec, M. Pavic, and F. Salvarani, \emph{Diffusion asymptotics of a kinetic model for gaseous mixtures}, Kinet. Relat. Models 6: 137-157, 2013.
 
\bibitem{9} 
J.-F. Bourgat, L. Desvillettes, P. Le Tallec, and B. Perthame. \emph{Microreversible collisions for polyatomic gases and Boltzmann's theorem}. European J. Mech. B Fluids, 13(2):237-254, 1994.


\bibitem{22} S. Brull, J. Schneider, \emph{On the ellipsoidal statistical model for polyatomic gases}. Continuum Mech. Thermodyn. 20: 489-508, 2009.
\bibitem{us}
S. Brull, M. Shahine, P. Thieullen. \emph{Compactness Property of the Linearized Boltzmann Operator for a diatomic Single Gas Model.} To appear in Networks and Heterogeneous Media.
\bibitem{cc}\emph{The Mathematical Theory of Non-Uniform Gases}, 3rd edn.
Cambridge University Press, Cambridge, 1990.
\bibitem{daus}
 E. S. Daus, A. J$\ddot{u}$ngel, C. Mouhot, and N. Zamponi.\emph{ Hypocoercivity for a linearized multispecies Boltzmann
system.} SIAM J. Math. Anal., 48(1):538-568, 2016.

\bibitem{doric}
V. \DJ jor\dj ji\'c, M. Pavi\'c-\v{C}oli\'c, N. Spasojevi\'c. (2021). \emph{Polytropic gas modelling at kinetic and macroscopic levels}, Kinetic and Related Models, Volume 14 (Issue 3), 483-5, 2021.

\bibitem{28} L. Desvillettes, R. Monaco, F. Salvarani,
\emph{A kinetic model allowing to obtain the energy law of polytropic gases in the presence of chemical reactions},
European Journal of Mechanics- B/Fluids,
Volume 24 (Issue 2), 219-236, 2005.


 
\bibitem{10} A. Ern, V. Giovangigli, \emph{The kinetic chemical equilibrium regime}, Physica A Volume 260, (Issues 1-2): 49-72, 1998.

\bibitem{milana} I.M. Gamba and M. Pavi\'{c}-\v{C}oli\'{c}, \emph{On the Cauchy problem for Boltzmann equation modeling a polyatomic gas},
ArXiv:2005.01017v2.

\bibitem{11} V. Giovangigli \emph{Multicomponent flow modeling}, MESST Series, Birkhauser Boston, 1999.

\bibitem{grad} H. Grad \emph{Asymptotic theory of the Boltzmann equation, II, Rarefied Gas Dyn., Paris: 26-59, 1962.}
\bibitem{bgk}
P.L. Gross, M. Krook \emph{Model for collision processes in gases: small-
amplitude oscillations of charged two-component systems,} Phys. Rev. 102
N3 (1956), 593-604.
\bibitem{holway}
Lowell H. Holway Jr. , \emph{New Statistical Models for Kinetic Theory: Methods of Construction}, The Physics of Fluids 9, 16581673 (1966) 
\bibitem{mouhotstrain}
 C. Mouhot and R. M. Strain.\emph{ Spectral gap and coercivity estimates for linearized Boltzmann
collision operators without angular cutoff.} J. Math. Pures Appl. (9), 87(5):515-535, 2007.
%\bibitem{25} {I. M{\"u}ller and T. Ruggeri}, \emph{Rational Extended Thermodynamics}, Springer \& Business Media, Vol 7.
 
 %\bibitem{23} E. Nagnibeda and E. Kustova  \emph{Non-equilibrium reacting gas flows: kinetic theory of transport and relaxation processes}, Springer, 2009.


%\bibitem{15} J.M. Orlac'h, V. Giovangigli, T. Novikova, P. Roca. \emph{Kinetic theory of two-temperature polyatomic plasmas}, Physica A: Statistical Mechanics and its Applications, Elsevier, 494, 503--546, 2018.
 \bibitem{physics}
 R.K. Pathria, Paul D. Beale,
\emph{ The Theory of Simple Gases},
Statistical Mechanics (Third Edition),
Academic Press, 2011.
 
 
%\bibitem{7}M. Pavic, \emph{Mathematical modelling and analysis of polyatomb ic gases and mixtures in the context of kinetic theory of gases and fluid mechanics} (Ph.D. thesis), \'Ecole Normale Sup\'erieure de Cachan, 2014.

\bibitem{pavicc}
 M. Pavi\'c and S. Simi\'c,\emph{ Moment equations for polyatomic gases}, Acta Appl. Math., 132(1),
469-482, 2014.
\bibitem{31} T. Ruggeri and M. Sugiyama: \emph{Rational Extended Thermodynamics beyond the Monatomic Gas} (Springer, Heidelberg, New York, Dordrecht, London, 2015).
 


\bibitem{yun}  S.B. Yun,\emph{  Ellipsoidal  BGK  model  for Polyatomic  Molecules  Near  Maxwellians:  a  Dichotomy  in  the Dissipation Estimate,} J. Differential Equations 266: (9), 5566-5614, 2019.
\end{thebibliography}
 \end{document}